\newtheorem{theo}{Theorem}[section]
\newtheorem{lemm}[theo]{Lemma}
\newtheorem{prop}[theo]{Proposition}
\newtheorem{defi}[theo]{Definition}
\newtheorem{properties}[theo]{Properties}
\newcommand{\R}{\mathbb{R}}
\numberwithin{equation}{section}
\newcommand{\N}{\mathbb{N}}
\newcommand{\Z}{\mathbb{Z}}
\def\x{\textit {\textbf x}}
\def\lambdav{\boldsymbol{\lambda}}
\def\C{\mathcal{C}}
\def\u{\textit {\textbf u}}
\def\v{\textit {\textbf v}}
\def\<{\langle}
\def\>{\rangle}
\def\0{{\bf 0}}
\def\pup1{{\partial\u\over\partial x_1}}
\title{\bf{\textsl{Very weak solution for the exterior stationary Stokes equations with Navier slip boundary condition} }}
\author{Anis Dhifaoui \\}
\date{
{\small  UR Analysis and Control of PDEs, UR13ES64,\\
Department of Mathematics\\
Faculty of Sciences of Monastir\\
University of Monastir\\
5019 Monastir, Tunisia}\\
\bigskip
{\small Email adresses: anisdhifaoui123@gmail.com}}
\begin{document}

\maketitle

\begin{abstract}
In some problems of fluid mechanics, it is possible to be confronted with data that are not regular, that is why we are interested here in the search for the so-called very weak solutions for the stationary Stokes problem with Navier-type boundary conditions in a three-dimensional exterior domain. The problem describes the flow of a viscous and incompressible fluid past an obstacle where we assume that the fluid may slip on the boundary of the obstacle. Because the flow domain is unbounded, we set the problem in weighted Sobolev spaces in order to control the behavior at infinity of the solutions. Our purpose is to prove the existence and the uniqueness of a very weak solution in a Hilbertian framework.
\\~\\
\textbf{Keywords}: Stokes equations, Navier boundary condition, exterior domain, weighted spaces, strong solutions, very weak solutions.\\

\end{abstract}
\section{Introduction}
We are interested in this paper on the existence of very weak solutions for the (linear stationary) Stokes problem in a domain $\Omega$ of $\R^3$. The set $\Omega$ is simply connected exterior domain, namely
the complement of a simply connected bounded domain which represents the obstacle. The system of stationary Stokes be written as follows:
\begin{eqnarray}\label{PS}
- \Delta\textbf{\textit{u}} +\nabla {\pi} = \textbf{\textit{f}} \quad \mathrm{and} \quad \mathrm{div}\, \textbf{\textit{u}}=0 \quad \mathrm{\text{ in }} \Omega,
\end{eqnarray}
where the unknowns are $\textit{\textbf{u}}$ the velocity of the fluid and its pressure $\pi$ and $\textbf{\textit{f}}$ the external forces acting on the fluid. To these equations, we supplement the
following Navier’s type slip boundary conditions:

\begin{equation}
\label{Navier.BC}
\textbf{\textit{u}}\cdot\textbf{\textit{n}}=0,\quad 2[\mathrm{\textbf{D}}(\textbf{\textit{u}})\textbf{\textit{n}}]_{\tau}+\alpha\textbf{\textit{u}}_{\tau}=\boldsymbol{0}
\quad\text{on}\quad\Gamma,
\end{equation}
where 
$\textbf{D}(\textbf{\textit{u}})=\dfrac{1}{2}\left(\nabla\,\textbf{\textit{u}}+\nabla\,\textbf{\textit{u}}^T\right)$ denotes the rate-of-strain tensor field, $\alpha$ is a scalar friction function, $\textbf{\textit{n}}$ is the unit normal vector to $\Gamma$
 and the notation $[\cdot]_\tau$ denotes the tangential component of a vector on $\Gamma$. The boundary condition~\eqref{Navier.BC}, proposed by H. Navier in 1827~\cite{NAVIER}. The first condition in~\eqref{Navier.BC} is the no-penetration condition and the second condition expresses the fact that the tangential 
velocity is proportional to the tangential stress. The Navier slip conditions have been extensively studied, see for instance~\cite{Achdou_CRAS_1995, Achdou_JCP_1998, Basson_CPAM_2008, 
Casado-Diaz_JDE_2003, Gerard_Varet_CMP_2010, Jager_JDE_2001, Beavers_JFM_1967, Solonnikov_TMIS_1973} and references therein.\\

\noindent The purpose of this paper is to study the exterior problem composed by the Stokes equations~\eqref{PS} and the Navier slip boundary conditions~\eqref{Navier.BC},
with a positive friction function $\alpha$ and where we also include the non homogeneous case.
Although the Stokes problem set in bounded domains with conditions~\eqref{Navier.BC} has been well studied by various authors 
(see for instance~\cite{Ahmed_2014, Beirao_ADE_2004, Amrita-2018} or~\cite{Amrouche_M2AS_2016, Solonnikov_TMIS_1973} for the case $\alpha=0$
and references therein), to the best of our knowledge, it is not the case when the domain is unbounded and $\alpha>0$. 
We can just mention~\cite{Russo_JDE_2011} where~\eqref{Navier.BC} was used for the stationary Navier-Stokes equations in exterior domains and \cite{DMR-2019} for the stationary  exterior Stokes equations, the authors in their articles they studied some results of existence and uniqueness for different types of solutions, including the variational solution $W^{1,2}_{0}(\Omega)\times L^{2}(\Omega)$ and the strong solutions  $W^{2,2}_{k+1}(\Omega)\times W^{1,2}_{k+1}(\Omega)$ for $k\in\Z$ .  For the case of Navier boundary conditions without friction ($\alpha=0$), let us mention~\cite{Meslamani_2013, LMR_2020}, where the following boundary conditions 
(also known as Hodge boundary conditions) were used:
\begin{equation}
 \label{Navier.flat.BC}
 \textbf{\textit{u}}\cdot\textbf{\textit{n}}=0,\quad\textbf{curl }\textit{\textbf{u}}\times\textbf{\textit{n}}=\boldsymbol{0}\quad\text{on}\quad\Gamma,
\end{equation}
where $\textbf{curl }\textit{\textbf{u}}$ is the vorticity field. These conditions coincide with~\eqref{Navier.BC} on flat boundaries when $\alpha=0$.
They were also used in~\cite{Beirao_CPAA_2006} for the study of the non stationary Navier-Stokes equations in half-spaces of $\R^3$.
We finally refer to~\cite{Mulone_Meccanica_1983, Mulone_AnnMatPuraAppl_1985}
for the study of the non stationary problem of Navier-Stokes with mixed boundary conditions that include~\eqref{Navier.BC} without friction.\\

\noindent The notion of very weak solution for the stationary Stokes or Navier-Stokes equations, corresponding to very irregular data, has been developed in the last years by Giga~\cite{Giga_81} and also by Lions and Magenes~\cite{Lions-Magenes_1968}  for the Laplace's equation. The Stokes problem when $\Omega$ is bounded with Dirichlet boundary condition has been studied by a large number of authors, from different points of view, here so we give some examples:  Giga and Sohr \cite{Giga},  Amrouche and Girault~\cite{Amrouche_CMJ_1994}  Galdi, Simader and Sohr~\cite{Galdi-2005},  K. Schumacher~\cite{Schumacher}, Amrouche and Rodriguez-Bellido~\cite{AMaria}, for the exterior domain we can mention Amrouche and Meslameni \cite{Meslameni-2013}. The very weak solutions of the problem~\eqref{PS}--~\eqref{Navier.BC} has been studied for a bounded domain in~\cite{Ahmed_2014} for $\alpha=0$, in~\cite{LMR_2020} for the exterior domain for the case $\alpha=0$ and in \cite{Yves} for the half-space.\\

\noindent The aim of this work is to study some results of existence, uniqueness of very weak solution for the stationary Stokes problem \eqref{PS} with the boundary condition \eqref{Navier.BC}. The study is based on a $L^2$-theory and, because the domain $\Omega$ is unbounded, we choose to set the problem in weighted spaces. The weight functions 
are polynomials and enable to describe the growth or the decay of functions at infinity which allows to 
look for solutions of~\eqref{PS}--\eqref{Navier.BC} with various behavior at infinity and this is one 
of the main advantages of the weighted spaces. One important question is to define rigorously the traces of the vector functions which are living in
subspaces of $W^{0,2}_{-k-1}(\Omega)$ (see Lemma~\ref{trace Navier p=2}).  We prove existence and uniqueness of very weak solutions $(\textbf{\textit{u}},\pi)$ belongs to $W^{0,2}_{-k-1}(\Omega)\times W^{-1,2}_{-k-1}(\Omega)$ (see Definition~\ref{définition formulation faible p=2}). The main idea here relies on the use of a duality argument using
the strong solutions obtained in~\cite{DMR-2019}.\\

\noindent \noindent The paper is organized as follows. In Section~\ref{sec.preliminaires}, we introduce the Notations,
the functional framework based on weighted Hilbert spaces. We recall the definitions of some spaces and their respective norms, besides some density results, characterization of dual space, we shall precise in which sense, the Navier slip boundary conditions are taken. 
We recall the main results on Stokes problem that we shall use. The main results of this paper are presented in Theorems~\ref{solutions très faibles p=2}~--~\ref{g neq 0} which proves the existence and uniqueness of very weak solution $(\textbf{\textit{u}},\pi)$ belongs to $W^{0,2}_{-k-1}(\Omega)\times W^{-1,2}_{-k-1}(\Omega)$, for $k\in\Z$. 
\section{Notations and preliminaries}
\label{sec.preliminaires}


\subsection{Notations}

\noindent Throughout this paper we assume that $\Omega'$ denotes a bounded open in $\R^3$ of class $\mathcal{C}^{2,1}$, simply connected bounded and with a connected boundary $\partial\Omega'=\Gamma$, representing an obstacle. Let $\Omega$ be the complement of $\overline{\Omega'}$ in $\R^3$, in other words an exterior domain. We will use bold characters for vector and matrix fields. Let $\N$ denote the set of non-negative integers and $\Z$ the set
of all integers . For any multi-index $\boldsymbol{\lambda}\in\N^3$, we denote
by $\partial^{\boldsymbol{\lambda}}$ the differential operator of order
$\boldsymbol{\lambda}$,
$$\partial^{\boldsymbol{\lambda}}=
{\partial^{|\lambdav|}\over\partial_1^{\lambda_1}\partial_2^{\lambda_2}\partial_3^{\lambda_3}},
\quad|\lambdav|=\lambda_1+\lambda_2+\lambda_2.$$
 \noindent For any $k\in\Z$, $\mathcal{P}_k$ stands for
the space of polynomials of degree less than or equal to $k$ and
$\mathcal{P}_k^{\Delta}$ the harmonic polynomials of $\mathcal{P}_k$. If $k$ is a
negative integer, we set by convention $\mathcal{P}_k=\{0\}$. We denote by $\mathcal{D}(\Omega)$ the space of
$\C^{\infty}$ functions with compact support in $\Omega$, $\mathcal{D}(\overline{\Omega})$ the restriction to $\Omega$ of functions belonging to 
$\mathcal{D}(\R^3)$. 
We recall that $\mathcal{D}'(\Omega)$ is the well-known space of distributions defined on
$\Omega$. We recall that $L^2(\Omega)$ is the well-known Lebesgue real space and for $m\ge1$, we recall that $H^{m}(\Omega)$ is the well-known Hilbert space $W^{m,2}(\Omega)$. We shall write $u\in
H_{loc}^{m}(\Omega)$ to mean that $u\in H^{m}(\mathcal{O})$, for any
bounded domain $\mathcal{O}$, with $\overline{\mathcal{O}}\subset\Omega$.
For any positive real number $R$, let $B_R$ denote the open ball centered at the origin, with radius $R$ and assuming that $R$ is sufficiently large for $\overline{\Omega'}\subset B_R$, we denote by $\Omega_R$ the intersection $\Omega \cap B_{R}$. The notation $\langle\cdot,\cdot\rangle$ will
denote adequate duality pairing and will be specified when needed. If not specified, $\langle\cdot,\cdot\rangle_{\Gamma}$ will denote the duality pairing between the space $H^{-1/2}(\Gamma)$ 
and its dual space $H^{1/2}(\Gamma)$. Given a Banach space $X$, with dual space $X'$ and a closed subspace $Y$ of $X$, we denote by $X'\perp Y$ the subspace of $X'$ orthogonal to $Y$, i.e.
\begin{equation*}
X'\perp Y=\big\lbrace f \in X'; <f,v>=0\quad \forall\, v \in X\big\rbrace=(X/Y)'.
\end{equation*}
The space $X'\perp Y$ is also called the polar space of $Y$ in $X'$. Given \textit{\textbf{A}} and \textit{\textbf{B}} two matrix fields, such that $\textit{\textbf{A}}=(a_{ij})_{1\leqslant i,j\leqslant 3}$ 
and $\textit{\textbf{B}}=(b_{ij})_{1\leqslant i,j\leqslant 3}$, then we define 
$\textit{\textbf{A}}:\textit{\textbf{B}}=(a_{ij}b_{ij})_{1\leqslant i,j\leqslant 3}$.
Finally, as usual, $C>0$ denotes a generic constant the value
of which may change from line to line and even at the same line.

\subsection{Weighted Hilbert spaces}

\noindent Let $\x=(x_1,x_2,x_3)$ be a typical point in $\R^3$ and let $r=|\x|=\left(x_1^2+x_2^2+x_3^2\right)^{1/2}$ denotes its distance to the origin. In order to control the behaviour at infinity of our functions and distributions we use for basic weights the quantity
$\rho(\x)=\left(1+r^2\right)^{1/2}$ which is equivalent to $r$ at infinity. For $k\in\Z$, we introduce
$$W_k^{0,2}(\Omega)=\Big\{u\in\mathcal{D}'(\Omega),\,\rho^k u\in L^2(\Omega)\Big\},$$
which is a Hilbert space equipped with the norm: 
$$\|u\|_{W_k^{0,2}(\Omega)}=\|\rho^k u\|_{L^2(\Omega)}.$$ 
Let $m\geqslant1$ be an integer. We define the weighted Hilbert space:
$$
W_{k}^{m,2}(\Omega)=\Big\{u\in \mathcal{D}'(\Omega);\,\forall\boldsymbol{\lambda}\in\N^{3}:
\,0\leq |\boldsymbol{\lambda}| \leq m,\,\rho^{k-m+|\boldsymbol{\lambda}|}\partial^{\boldsymbol{\lambda}}u \in L^{2}(\Omega) \Big\},
$$
equipped with the norm
$$
\|u\|_{W_{k}^{m,2}(\Omega)}= \left(\sum_{0\leqslant|\boldsymbol{\lambda}|\leqslant m}
\|\rho^{k-m+|\boldsymbol{\lambda}|}\partial^{\boldsymbol{\lambda}}u\|^{2}_{L^{2}(\Omega)}\right)^{1/2}.
$$
\noindent We define the semi-norm
$$|u|_{W_k^{m,2}(\Omega)}=\left(\sum_{|\boldsymbol{\lambda}|=m}\|\rho^k\partial^{\boldsymbol{\lambda}}u\|_{L^2(\Omega)}\right)^{1/2}.$$
\noindent Let us give an examples of such spaces that will be often used in the remaining of the paper. 
For $m=1$, we have
$$W_{k}^{1,2}(\Omega)=\Big\{u\in \mathcal{D}'(\Omega),\,\rho^{k-1}u\in L^{2}(\Omega),\,\rho^{k}\nabla u\in {L}^{2}(\Omega)\Big\}.$$
For $m=2$, we have
$$
W_{k+1}^{2,2}(\Omega):=\left\lbrace u\in {W_{k}^{1,2}(\Omega)}, \rho^{k+1}\nabla^{2} u \in L^{2}(\Omega)\right\rbrace. 
$$
\noindent For the sake of simplicity, we have defined these spaces with integer exponents on the weight function. 
But naturally, these definitions can be extended to real number exponents with eventually some slight modifications~(see\cite{Amrouche_1994} for more details).\\

\noindent We shall now give some basic properties of those spaces:
\begin{properties}
\label{prop.proprietes.espaces.poids}
\
\begin{enumerate}
\item The space $\mathcal{D}(\overline{\Omega})$ is dense
in $W_k^{m,2}(\Omega).$
\item For any $\lambdav\in\N^3$, the mapping
\begin{equation}
\label{derive.espaces.poids}
u\in W_{k}^{m,2}(\Omega)\rightarrow\partial^{\boldsymbol{\lambda}}u\in W_{k}^{m-|\boldsymbol{\lambda}|,2}(\Omega)
\end{equation}
is continuous.
\item For $m\in\N\setminus\{0\}$ and $k\in\Z$,  we have the following continuous imbedding:
\begin{equation}\label{inclusion.sobolev}
W_{k}^{m,2}(\Omega)\hookrightarrow W_{k-1}^{m-1,2}(\Omega).
\end{equation}
\item The space $\mathcal{P}_{m-2-k}$ is the space of all polynomials included in $W_k^{m,2}(\Omega)$ and the following Poincar\'e-type inequality holds: 
\begin{equation}\label{thhardy}
\forall u\in W_k^{m,2}(\Omega),\quad\inf_{\mu\in\mathcal{P}_{j'}}\|u+\mu\|_{W_k^{m,2}(\Omega)}\leqslant
C|u|_{W_k^{m,2}(\Omega)},
\end{equation}
where $j'=\min(m-2-k,m-1)$. In other words the semi-norm $|\cdot|_{W_k^{m,2}(\Omega)}$ is a norm on $W_k^{m,2}(\Omega)/\mathcal{P}_{j'}$.
In particular $|\cdot|_{W_0^{1,2}(\Omega)}$ is a norm on $W_0^{1,2}(\Omega)$.
\end{enumerate}
\end{properties}
\noindent For more details on the above properties and the ones that we shall present in the following, 
the reader can refer to~\cite{Hanouzet_1971, KUFNER, Amrouche_1994, Amrouche_JMPA_1997} and references therein.\\ 

\noindent Note that all the local properties of the space $W_k^{m,2}(\Omega)$ coincide with those of the
standard Hilbert spaces $H^{m,2}(\Omega)$. Hence, it also satisfies the
usual trace theorems on the boundary $\Gamma$. Therefore, we can
define the space
$$\mathring{W}_k^{m,2}(\Omega)=\Big\{u\in W_k^{m,2}(\Omega),\,\gamma_0u=0,\,\gamma_1u=0,\cdots,\gamma_{m-1}u=0\text{ on }\Gamma\Big\}.$$
If $\Omega$ is the whole space $\R^3$, then the spaces 
$\mathring{W}_k^{m,2}(\R^3)$ and $W_{k}^{m,2}(\R^3)$ coincide. The space
$\mathcal{D}(\Omega)$ is dense in $\mathring{W}_k^{m,2}(\Omega)$.
Therefore, the dual space of $\mathring{W}_k^{m,2}(\Omega)$,
denoted by $W_ {-k}^{-m,2}(\Omega)$ is a space of
distributions. For $m\in\N$ and $k\in\Z$, we have the continuous imbedding 
\begin{equation}\label{inclusion.sobolev2}
W_k^{-m,2}(\Omega)\subset W_{k-1}^{-m-1,2}(\Omega).
\end{equation}
Moreover, we have the following Poincar\'e-type
inequality:
\begin{equation}\label{thHardy2}
\forall u\in\mathring{W}_k^{m,2}(\Omega),\quad\|u\|_{W_k^{m,2}(\Omega)}\le C|u|_{W_k^{m,2}(\Omega)}.
\end{equation}

\noindent We now introduce some weighted Hilbert spaces that are specific for the study of the Stokes problem~\eqref{PS} with the Navier boundary conditions \eqref{Navier.BC}. 
We start by introducing the following space for $k \in \Z$:
\begin{equation*}
H_{k}(\mathrm{div};\Omega)=\Big\{ \textbf{\textit{v}}\in W_{k}^{0,2}(\Omega),\,\mathrm{div}\,\,\textbf{\textit{v}}\in W_{k+1}^{0,2}(\Omega)\Big\}.
\end{equation*} 
\noindent This space is endowed with the norm

$$ 
\|\textbf{\textit{v}}\|_{H_{k}(\mathrm{div};\Omega)}=\left( \|\textbf{\textit{v}}\|^{2}_{W_{k}^{0,2}(\Omega)}+\|\mathrm{div} \,\, \textbf{\textit{v}}\|^{2}_{{W}_{k+1}^{0,2}(\Omega)}\right)^{1/2}.
 $$
\noindent Observe that $\mathcal{D}(\overline{\Omega})$ is dense in $H_{k}(\mathrm{div};\Omega)$.  
For the proof, one can use the same arguments as for the proof of the density of $\mathcal{D}(\overline{\Omega})$ in $W_{k}^{m,2}(\Omega)$ (see \cite{Hanouzet_1971}). 
Therefore, denoting by $\textbf{\textit{n}}$ the unit normal vector to the boundary $\Gamma$ pointing outside $\Omega$, if $\textit{\textbf{v}}$ belongs to $H_{k}(\mathrm{div};\Omega)$, 
then $\textit{\textbf{v}}$ has a normal trace $\textbf{\textit{v}}\cdot \textbf{\textit{n}}$ in $H^{-1/2}(\Gamma)$ and  there exists $C>0$
such that
\begin{equation}
\label{theorem.trace.normale}
 \forall \textbf{\textit{v}} \in H_{k}(\mathrm{div};\Omega),\quad\|\textbf{\textit{v}}\cdot \textbf{\textit{n}}\,\|_{ H^{-1/2}(\Gamma)}\leq C
 \|\textbf{\textit{v}}\,\|_{H_{k}(\mathrm{div};\Omega)}.
\end{equation}

\noindent Moreover the below Green's formula holds. For any $\textbf{\textit{v}} \in H_{k}(\mathrm{div};\Omega)$ and $\boldsymbol{\varphi} \in W_{-k}^{1,2}(\Omega)$, 
we have 
\begin{equation}\label{FG1}
\left\langle \textbf{\textit{v}}\cdot\textbf{\textit{n}}, \boldsymbol{\varphi}\right\rangle_{\Gamma}
=\int_{\Omega}\textbf{\textit{v}}\cdot\nabla\,\boldsymbol{\varphi}\,d\textbf{\textit{x}}+\int_{\Omega}\boldsymbol{\varphi}\,\mathrm{div}\,\textbf{\textit{v}}\,d\textbf{\textit{x}}.
\end{equation}
The closure of $\mathcal{D}({\Omega})$ in ${H}_{k}(\mathrm{div};\Omega)$ is denoted by $\mathring{H}_{k}(\mathrm{div};\Omega)$ and can be characterized by
$$\mathring{H}_{k}(\mathrm{div};\Omega)=\Big\{ \textbf{\textit{v}}\in {H}_{k}(\mathrm{div};\Omega),\,
\textbf{\textit{v}}\cdot\textbf{\textit{n}}=0\,\text{ on }\, \Gamma\Big\}.$$
Its dual space is denoted by ${H}_{-k}^{-1}(\mathrm{div};\Omega)$  and is characterized by the below proposition.

\begin{prop}
\label{caracterisation du H'}
Assume that $\Omega$ is of class $\mathcal{C}^{1,1}$ and let $k\in\Z$.
A distribution  $\textbf{\textit{f}}$ belongs to ${H}_{k}^{-1}(\mathrm{div};\Omega)$ if and only if there exist $\boldsymbol{\psi}\in W^{0,2}_{k}(\Omega)$ and $\chi\in W^{0,2}_{k-1}(\Omega)$, 
such that $\textbf{\textit{f}}=\boldsymbol{\psi}+\nabla\chi$. Moreover
$$
\|\boldsymbol{\psi}\|_{W^{0,2}_{k}(\Omega)} + \|\chi\|_{W^{0,2}_{k-1}(\Omega)}\leq C\|\textbf{\textit{f }}\|_{{H}_{k}^{-1}(\mathrm{div};\Omega)}.
$$
 \end{prop}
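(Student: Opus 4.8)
The plan is to characterize the dual space $H_k^{-1}(\mathrm{div};\Omega)$, which by definition is the dual of $\mathring{H}_{-k}(\mathrm{div};\Omega)$. The key structural observation is that the space $H_{-k}(\mathrm{div};\Omega)$ embeds naturally into a product space: the map $\textbf{\textit{v}}\mapsto(\textbf{\textit{v}},\mathrm{div}\,\textbf{\textit{v}})$ sends $H_{-k}(\mathrm{div};\Omega)$ isometrically onto a closed subspace of the Hilbert space $\textbf{\textit{E}}:=W_{-k}^{0,2}(\Omega)\times W_{-k+1}^{0,2}(\Omega)$ (both factors being Hilbert spaces by the definitions recalled above). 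The restriction to the closure $\mathring{H}_{-k}(\mathrm{div};\Omega)$ gives a closed subspace of $\textbf{\textit{E}}$ as well.

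First I would fix a continuous linear functional $\textbf{\textit{f}}\in H_k^{-1}(\mathrm{div};\Omega)=\big(\mathring{H}_{-k}(\mathrm{div};\Omega)\big)'$. Viewing $\mathring{H}_{-k}(\mathrm{div};\Omega)$ as a closed subspace of $\textbf{\textit{E}}$ via the isometry above, I would invoke the Hahn--Banach theorem to extend $\textbf{\textit{f}}$ to a continuous linear functional on all of $\textbf{\textit{E}}$ without increasing its norm. Since $\textbf{\textit{E}}$ is a product of the two weighted $L^2$-type Hilbert spaces, its dual is the product of the corresponding duals, namely $\big(W_{-k}^{0,2}(\Omega)\big)'\times\big(W_{-k+1}^{0,2}(\Omega)\big)'$. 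The duality between $W_j^{0,2}(\Omega)$ and $W_{-j}^{0,2}(\Omega)$ (realized through the weight $\rho^{2j}$ pairing) identifies these duals with $W_{k}^{0,2}(\Omega)$ and $W_{k-1}^{0,2}(\Omega)$ respectively. Thus the extended functional is represented by a pair $(\boldsymbol{\psi},-\chi)$ with $\boldsymbol{\psi}\in W_k^{0,2}(\Omega)$ and $\chi\in W_{k-1}^{0,2}(\Omega)$, and the Hahn--Banach norm control yields the estimate $\|\boldsymbol{\psi}\|_{W_k^{0,2}(\Omega)}+\|\chi\|_{W_{k-1}^{0,2}(\Omega)}\leq C\|\textbf{\textit{f}}\|_{H_k^{-1}(\mathrm{div};\Omega)}$.

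It then remains to translate the action of the functional back into the claimed representation $\textbf{\textit{f}}=\boldsymbol{\psi}+\nabla\chi$. Unwinding the pairing, for every $\textbf{\textit{v}}\in\mathring{H}_{-k}(\mathrm{div};\Omega)$ one has $\langle\textbf{\textit{f}},\textbf{\textit{v}}\rangle=\int_\Omega\boldsymbol{\psi}\cdot\textbf{\textit{v}}\,d\textbf{\textit{x}}-\int_\Omega\chi\,\mathrm{div}\,\textbf{\textit{v}}\,d\textbf{\textit{x}}$; testing first against $\textbf{\textit{v}}\in\mathcal{D}(\Omega)$ and using the definition of the distributional gradient of $\chi$ shows that $\textbf{\textit{f}}=\boldsymbol{\psi}+\nabla\chi$ in $\mathcal{D}'(\Omega)$. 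The converse direction is straightforward: if $\textbf{\textit{f}}=\boldsymbol{\psi}+\nabla\chi$ with such $\boldsymbol{\psi},\chi$, then for any $\textbf{\textit{v}}\in\mathring{H}_{-k}(\mathrm{div};\Omega)$ the Green-type integration by parts (consistent with~\eqref{FG1} and the vanishing normal trace defining $\mathring{H}_{-k}(\mathrm{div};\Omega)$) shows $\textbf{\textit{f}}$ defines a bounded functional, so $\textbf{\textit{f}}\in H_k^{-1}(\mathrm{div};\Omega)$.

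I expect the main obstacle to lie in identifying the dual of the weighted space $W_j^{0,2}(\Omega)$ correctly and in verifying that the embedding $\textbf{\textit{v}}\mapsto(\textbf{\textit{v}},\mathrm{div}\,\textbf{\textit{v}})$ has closed range in $\textbf{\textit{E}}$, since the Hahn--Banach extension argument hinges on working inside a genuine Hilbert (or at least Banach) product space and on the representation of its dual as the corresponding weighted product. Care is also needed in the boundary term: because we work with $\mathring{H}_{-k}(\mathrm{div};\Omega)$, the normal trace $\textbf{\textit{v}}\cdot\textbf{\textit{n}}$ vanishes on $\Gamma$, so no boundary contribution survives in~\eqref{FG1}, which is exactly what makes the pairing $-\int_\Omega\chi\,\mathrm{div}\,\textbf{\textit{v}}\,d\textbf{\textit{x}}$ coincide with $\langle\nabla\chi,\textbf{\textit{v}}\rangle$; this compatibility must be checked explicitly to close the argument.
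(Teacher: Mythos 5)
Your argument is correct and is essentially the standard proof of this characterization: embed $\mathring{H}_{-k}(\mathrm{div};\Omega)$ isometrically into the product $W^{0,2}_{-k}(\Omega)\times W^{0,2}_{-k+1}(\Omega)$ via $\textbf{\textit{v}}\mapsto(\textbf{\textit{v}},\mathrm{div}\,\textbf{\textit{v}})$, extend the functional by Hahn--Banach, identify the duals of the weighted $L^2$ factors under the plain $L^2$ pairing, and read off $\textbf{\textit{f}}=\boldsymbol{\psi}+\nabla\chi$ by testing against $\mathcal{D}(\Omega)$. The paper itself gives no proof but defers to Amrouche--Meslameni \cite[Proposition~1.3]{Meslamani_2013}, where precisely this duality argument is carried out, so your route coincides with the intended one; note only that the converse direction needs nothing more than density of $\mathcal{D}(\Omega)$ in $\mathring{H}_{-k}(\mathrm{div};\Omega)$ (which holds by definition), so the discussion of the vanishing normal trace in \eqref{FG1} is not actually required to close the argument.
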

\noindent The proof of Proposition~\ref{caracterisation du H'} can be found in~\cite[Proposition 1.3]{Meslamani_2013}.
A consequence of this proposition and the imbedding~\eqref{inclusion.sobolev2} is that, for any $k\in\Z$, we have the imbedding
\begin{equation}
\label{inclusion.Hdiv}
H_k^{-1}(\mathrm{div};\Omega)\subset W_{k-1}^{-1,2}(\Omega).
\end{equation} 
\subsection{Density and trace results}
In this work, we are going to study the existence and uniqueness of very weak solutions for the stationary Stokes problem~\eqref{PS}--\eqref{Navier.BC}. Let us recall some notations related to the boundary condition. First, for any vector field $\textbf{\textit{v}}$ on $\Gamma$, we can write
\begin{equation}
\label{decomposition.trace}
\textbf{\textit{v}}=\textbf{\textit{v}}_{\tau}+(\textbf{\textit{v}}\cdot\textbf{\textit{n}})\textbf{\textit{n}},
\end{equation}
where $\textbf{\textit{v}}_{\tau}$ is the projection of $\textbf{\textit{v}}$ on the tangent hyper-plan to $\Gamma$. Next, for any  point  $\textbf{\textit{x}}$ on $\Gamma$, one may choose an open neighbourhood $\mathcal{W}$ of $\textbf{\textit{x}}$ in $\Gamma$ small enough to allow the existence of two families of $\mathcal{C}^2$ curves on $\mathcal{W}$ and where the lengths $s_{1}$ and $s_{2}$ along each family of curves are possible system of coordinates. Denoting by $\boldsymbol{\tau}_1$, $\boldsymbol{\tau}_2$ the unit tangent vectors to each family of curves, we have
 $$\textbf{\textit{v}}_{\tau}=(\textbf{\textit{v}}\cdot\boldsymbol{\tau}_{1})\boldsymbol{\tau}_{1}+(\textbf{\textit{v}}\cdot\boldsymbol{\tau}_{2})\boldsymbol{\tau}_{2}.
$$
As a result for any $\textbf{\textit{v}}\in \mathcal{D}(\overline{\Omega})$ the following formula holds (see \cite[Lemma 2.1]{Ahmed_2014})
\begin{eqnarray}\label{tenseur de déformation sur le bord}
2[\mathrm{\textbf{D}}(\textbf{\textit{v}})\textbf{\textit{n}}]_{\tau}=\nabla_{\tau}(\textbf{\textit{v}}\cdot\textbf{\textit{n}})+\left( \frac{\partial \textbf{\textit{v}}}{\partial \textbf{\textit{n}}}\right)_{\tau}-\Lambda \textbf{\textit{v}}\qquad\text{ on }\quad\Gamma,
\end{eqnarray}
 where
\begin{eqnarray*}
\Lambda\,\textbf{\textit{v}}=\sum_{k=1}^{2}\left(\textbf{\textit{v}}_{\tau}\cdot\frac{\partial\textbf{\textit{n}}}{\partial s_{k}} \right) \boldsymbol{\tau}_{k}.
\end{eqnarray*}
\noindent Now, we need to introduce some functional spaces. Otherwise, it is necessary to establish some Green formulas which are deduce from density lemmas. We start by the following space:
\begin{eqnarray*}
T_{k+1}^{}(\Omega)=\Big\lbrace\textbf{\textit{v}}\in \mathring{H}_{k-1}^{}(\mathrm{div},\Omega);\,\,\,\, \mathrm{div}\,\textbf{\textit{v}}\in \mathring{W}^{1,2}_{k+1}(\Omega)\Big\rbrace,
\end{eqnarray*}
which is a Hilbert space equipped with the following norm
\begin{eqnarray*}
\Vert\textbf{\textit{v}}\Vert_{T_{k+1}(\Omega)}=\Vert\textbf{\textit{v}}\Vert_{{W}^{0,2}_{k-1}(\Omega)}+\Vert\mathrm{div}\,\textbf{\textit{v}}\Vert_{{W}^{1,2}_{k+1}(\Omega)}
\end{eqnarray*}
The following Lemma state some properties related to the space $T_{k+1}(\Omega)$.
\begin{lemm}\label{caractérisation T^2} Let $k\in \Z$, the following properties hold:\quad\\
\begin{itemize}
\item[1-] A distribution $\textbf{\textit{f}}$ belongs to $(T_{k+1}(\Omega))'$ if and only if there exist $\boldsymbol{\phi}\in W^{0,2}_{-k+1}(\Omega)$ and $f_0\in W^{-1,2}_{-k-1}(\Omega)$, such that 
\begin{eqnarray*}\label{caractérisation T}
\textbf{\textit{f}}=\boldsymbol{\phi}+\nabla\,f_0
\end{eqnarray*}
\item[2-] The space $\mathcal{D}(\Omega)$ is dense in $T_{k+1}(\Omega)$. \\
 \item[3-] For any $\chi\in W^{-1,2}_{-k-1}(\Omega)$ and $\textbf{\textit{v}}\in T_{k+1}(\Omega)$, we have
\begin{eqnarray}\label{formule d'integration dans T^2}
\langle\nabla\,\chi,\textbf{\textit{v}}\rangle_{(T_{k+1}(\Omega))'\times T_{k+1}(\Omega)}=-\langle\chi,\mathrm{div}\,\textbf{\textit{v}}\rangle_{W^{-1,2}_{-k-1}(\Omega)\times \mathring{W}^{1,2}_{k+1}(\Omega)}.
\end{eqnarray}
\end{itemize}
\end{lemm}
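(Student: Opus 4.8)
The plan is to derive all three statements from a single Hahn--Banach argument attached to the canonical map
$$
\iota:T_{k+1}(\Omega)\longrightarrow W^{0,2}_{k-1}(\Omega)\times\mathring{W}^{1,2}_{k+1}(\Omega),\qquad
\textbf{\textit{v}}\longmapsto\big(\textbf{\textit{v}},\,\mathrm{div}\,\textbf{\textit{v}}\big),
$$
which, by the very definition of the norm on $T_{k+1}(\Omega)$, is an isometry onto a closed subspace. Since the dual of a product is the product of the duals, and since the weighted $L^2$ pairing gives $\big(W^{0,2}_{k-1}(\Omega)\big)'=W^{0,2}_{-k+1}(\Omega)$ while $\big(\mathring{W}^{1,2}_{k+1}(\Omega)\big)'=W^{-1,2}_{-k-1}(\Omega)$, every continuous functional on the target is represented by a pair $(\boldsymbol{\phi},f_0)\in W^{0,2}_{-k+1}(\Omega)\times W^{-1,2}_{-k-1}(\Omega)$. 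This observation is the engine behind the whole lemma.

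For item~1 I would prove the two implications separately. Sufficiency is direct: for $\boldsymbol{\phi}\in W^{0,2}_{-k+1}(\Omega)$ and $f_0\in W^{-1,2}_{-k-1}(\Omega)$, the map $\textbf{\textit{v}}\mapsto\langle\boldsymbol{\phi},\textbf{\textit{v}}\rangle-\langle f_0,\mathrm{div}\,\textbf{\textit{v}}\rangle$ is bounded by $\|\boldsymbol{\phi}\|_{W^{0,2}_{-k+1}(\Omega)}\|\textbf{\textit{v}}\|_{W^{0,2}_{k-1}(\Omega)}+\|f_0\|_{W^{-1,2}_{-k-1}(\Omega)}\|\mathrm{div}\,\textbf{\textit{v}}\|_{W^{1,2}_{k+1}(\Omega)}$, hence lies in $(T_{k+1}(\Omega))'$, and its restriction to $\mathcal{D}(\Omega)$ is exactly the distribution $\boldsymbol{\phi}+\nabla f_0$. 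For necessity, I would extend $\textbf{\textit{f}}\in(T_{k+1}(\Omega))'$ along $\iota$ by Hahn--Banach, read off the representing pair $(\boldsymbol{\phi},f_0)$ so that, after a harmless change of sign in $f_0$, $\langle\textbf{\textit{f}},\textbf{\textit{v}}\rangle=\langle\boldsymbol{\phi},\textbf{\textit{v}}\rangle-\langle f_0,\mathrm{div}\,\textbf{\textit{v}}\rangle$, and then test against $\textbf{\textit{v}}\in\mathcal{D}(\Omega)$; since $\langle f_0,\mathrm{div}\,\textbf{\textit{v}}\rangle=-\langle\nabla f_0,\textbf{\textit{v}}\rangle$ for such $\textbf{\textit{v}}$, this gives $\textbf{\textit{f}}=\boldsymbol{\phi}+\nabla f_0$ in $\mathcal{D}'(\Omega)$, the norm of $(\boldsymbol{\phi},f_0)$ being controlled by that of $\textbf{\textit{f}}$ through the extension.

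For the density in item~2 I would invoke the Hahn--Banach criterion: it is enough to check that every $\textbf{\textit{f}}\in(T_{k+1}(\Omega))'$ vanishing on $\mathcal{D}(\Omega)$ is the zero functional. By item~1 such an $\textbf{\textit{f}}$ is carried by a pair $(\boldsymbol{\phi},f_0)$, and vanishing on $\mathcal{D}(\Omega)$ forces $\boldsymbol{\phi}=-\nabla f_0$ in $\mathcal{D}'(\Omega)$, so in particular $\nabla f_0\in W^{0,2}_{-k+1}(\Omega)$. The crucial step is to upgrade $f_0$: combining $\nabla f_0\in W^{0,2}_{-k+1}(\Omega)$ with $f_0\in W^{-1,2}_{-k-1}(\Omega)$ and the weighted Poincaré inequality~\eqref{thhardy} gives $f_0\in W^{1,2}_{-k+1}(\Omega)$, the finitely many polynomial ambiguities being ruled out by the membership $f_0\in W^{-1,2}_{-k-1}(\Omega)$ through the imbeddings~\eqref{inclusion.sobolev}--\eqref{inclusion.sobolev2}. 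Once $f_0\in W^{1,2}_{-k+1}(\Omega)$ is available, the Green formula~\eqref{FG1}, used with $k$ replaced by $k-1$ so that the test field lives in $W^{1,2}_{-k+1}(\Omega)$, applied to $\textbf{\textit{v}}\in\mathring{H}_{k-1}(\mathrm{div};\Omega)$ with vanishing normal trace yields $\int_{\Omega}\nabla f_0\cdot\textbf{\textit{v}}\,d\textbf{\textit{x}}+\langle f_0,\mathrm{div}\,\textbf{\textit{v}}\rangle=0$, that is $\langle\textbf{\textit{f}},\textbf{\textit{v}}\rangle=0$ for all $\textbf{\textit{v}}\in T_{k+1}(\Omega)$. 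Item~3 then follows at once: by item~1, $\nabla\chi$ is the functional attached to the pair $(\boldsymbol{0},\chi)$, and the density just proved makes its continuous extension unique, so its action is forced to be $\textbf{\textit{v}}\mapsto-\langle\chi,\mathrm{div}\,\textbf{\textit{v}}\rangle$.

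The heart of the matter, and the step I expect to be hardest, is item~2. A naive proof of density by truncation and mollification breaks down, because cutting $\textbf{\textit{v}}$ off with a cutoff $\zeta_R$ creates the divergence error $\nabla\zeta_R\cdot\textbf{\textit{v}}$, whose $W^{1,2}_{k+1}$-norm involves the full gradient $\nabla\textbf{\textit{v}}$ --- an object the space $T_{k+1}(\Omega)$ simply does not control. This is precisely what dictates the duality route, and the delicate point inside it is the weighted regularity claim $f_0\in W^{1,2}_{-k+1}(\Omega)$, where the weights and the possible polynomial kernels must be tracked according to the value of $k$, in the same spirit as Proposition~\ref{caracterisation du H'}.
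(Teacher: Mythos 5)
The paper does not actually prove this lemma: it states that the proofs of Lemma~2.2 and Lemma~2.3 can be found in \cite{LMR_2020}, so there is no internal proof to compare against, and your proposal has to be judged on its own. Your argument is the standard one for dual characterizations of graph spaces of this type (isometric embedding into a product, Hahn--Banach, restriction to $\mathcal{D}(\Omega)$ for item~1; the annihilator criterion for item~2; uniqueness of the continuous extension for item~3), and it is in the same spirit as Proposition~\ref{caracterisation du H'} and as the arguments of \cite{Meslamani_2013, LMR_2020}, so the overall route is sound. Two points in your treatment of item~2 need more care. First, the upgrade from $\nabla f_0\in W^{0,2}_{-k+1}(\Omega)$ and $f_0\in W^{-1,2}_{-k-1}(\Omega)$ to $f_0\in W^{1,2}_{-k+1}(\Omega)$ does not follow from the Poincar\'e-type inequality~\eqref{thhardy}, which presupposes that $f_0$ already belongs to $W^{1,2}_{-k+1}(\Omega)$; what you actually need is the stronger statement that a distribution on an exterior domain whose gradient lies in $W^{0,2}_{-k+1}(\Omega)$ itself lies in $W^{1,2}_{-k+1}(\Omega)$ up to a polynomial (this is in \cite{Amrouche_1994, Amrouche_JMPA_1997} and should be invoked explicitly, since for integer weights in $\R^3$ no critical logarithmic case occurs). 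Second, the polynomial ambiguity is not ``ruled out'' by the membership $f_0\in W^{-1,2}_{-k-1}(\Omega)$: the polynomials that can occur have degree at most $k-2$, and $\mathcal{P}_{k-2}$ is already contained in $W^{1,2}_{-k+1}(\Omega)$ (and in $W^{0,2}_{-k}(\Omega)\subset W^{-1,2}_{-k-1}(\Omega)$), so these terms are simply absorbed rather than excluded. With these two corrections, the Green formula~\eqref{FG1} applied with weight index $k-1$ together with the vanishing normal trace of elements of $\mathring{H}_{k-1}(\mathrm{div};\Omega)$ closes the density argument exactly as you describe, and items~1 and~3 are correct as written.
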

\noindent Giving a meaning to Navier boundary conditions of a very weak solution of a Stokes problem is not easy. For this reason, we need to introduce the following Hilbert space, for $k\in\Z$:
\begin{eqnarray*}
{H}^{}_{-k-1}(\Delta;\Omega)=\Big\lbrace \textbf{\textit{v}}\in W^{0,2}_{-k-1}(\Omega);\,\,\, \Delta\,\textbf{\textit{v}}\in (T^{}_{k+1}(\Omega))'\Big\rbrace,
\end{eqnarray*}

which has the following norm:
\begin{eqnarray*}
\Vert\textbf{\textit{v}}\Vert_{{H}_{-k-1}(\Delta;\Omega)}=\Vert\textbf{\textit{v}}\Vert_{W^{0,2}_{-k-1}(\Omega)}+\Vert\Delta\,\textbf{\textit{v}}\Vert_{(T^{}_{k+1}(\Omega))'}
\end{eqnarray*}
\begin{lemm}\label{desnité dans HDelta}
The space $\mathcal{D}(\overline{\Omega})$ is dense in ${H}_{-k-1}(\Delta;\Omega)$.
\end{lemm}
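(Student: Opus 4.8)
The plan is to prove the density of $\mathcal{D}(\overline{\Omega})$ in ${H}_{-k-1}(\Delta;\Omega)$ by the standard Hahn--Banach duality argument. Since ${H}_{-k-1}(\Delta;\Omega)$ is a Hilbert space, it suffices to show that every continuous linear functional on ${H}_{-k-1}(\Delta;\Omega)$ that vanishes on $\mathcal{D}(\overline{\Omega})$ must vanish identically. So first I would take an arbitrary element $L$ of the dual space $\big({H}_{-k-1}(\Delta;\Omega)\big)'$ with $L=0$ on $\mathcal{D}(\overline{\Omega})$, and seek to conclude $L\equiv 0$.

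The second step is to represent $L$ concretely. By the definition of the graph norm on ${H}_{-k-1}(\Delta;\Omega)$, the map $\textbf{\textit{v}}\mapsto(\textbf{\textit{v}},\Delta\textbf{\textit{v}})$ is an isometric embedding of ${H}_{-k-1}(\Delta;\Omega)$ into the product space $W^{0,2}_{-k-1}(\Omega)\times (T_{k+1}(\Omega))'$. Hence, by Hahn--Banach and the Riesz representation of the dual of a product, any $L$ can be written via a pair $(\textbf{\textit{g}},\textbf{\textit{h}})$ with $\textbf{\textit{g}}$ in the dual of $W^{0,2}_{-k-1}(\Omega)$, namely $W^{0,2}_{k+1}(\Omega)$, and $\textbf{\textit{h}}$ in the predual $T_{k+1}(\Omega)$, so that for all $\textbf{\textit{v}}$,
\begin{equation*}
L(\textbf{\textit{v}})=\int_{\Omega}\textbf{\textit{g}}\cdot\textbf{\textit{v}}\,d\textbf{\textit{x}}+\langle\Delta\textbf{\textit{v}},\textbf{\textit{h}}\rangle_{(T_{k+1}(\Omega))'\times T_{k+1}(\Omega)}.
\end{equation*}
Testing against $\textbf{\textit{v}}\in\mathcal{D}(\Omega)$ and using that $L$ vanishes there gives, in the distributional sense, $\textbf{\textit{g}}+\Delta\textbf{\textit{h}}=\boldsymbol{0}$ in $\Omega$, where $\Delta\textbf{\textit{h}}$ is understood through the duality already defined on $T_{k+1}(\Omega)$. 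This identifies $\textbf{\textit{h}}$ as a solution of an elliptic problem with right-hand side $\textbf{\textit{g}}\in W^{0,2}_{k+1}(\Omega)$.

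The third step is to upgrade the regularity and decay of $\textbf{\textit{h}}$. Since $\textbf{\textit{h}}\in T_{k+1}(\Omega)\subset \mathring{H}_{k-1}(\mathrm{div};\Omega)\subset W^{0,2}_{k-1}(\Omega)$ and $\Delta\textbf{\textit{h}}=-\textbf{\textit{g}}\in W^{0,2}_{k+1}(\Omega)$, I would invoke the weighted elliptic regularity for the Laplacian in the exterior domain (the isomorphism results on weighted Sobolev spaces recalled from \cite{Amrouche_1994, Amrouche_JMPA_1997}) to conclude that in fact $\textbf{\textit{h}}\in W^{2,2}_{k+1}(\Omega)$, so that $\textbf{\textit{h}}$ has well-defined traces $\textbf{\textit{h}}$ and $\partial\textbf{\textit{h}}/\partial\textbf{\textit{n}}$ on $\Gamma$. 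The point of the density statement is precisely that for smooth $\textbf{\textit{v}}\in\mathcal{D}(\overline{\Omega})$ one may integrate by parts twice; the difference between $L(\textbf{\textit{v}})$ and its value computed for general $\textbf{\textit{v}}$ is a sum of boundary integrals over $\Gamma$ involving the traces of $\textbf{\textit{h}}$ against the traces of $\textbf{\textit{v}}$ and $\partial\textbf{\textit{v}}/\partial\textbf{\textit{n}}$. Using that $\textbf{\textit{v}}\in\mathcal{D}(\overline{\Omega})$ has arbitrary traces and that $L$ vanishes on this space, I would force these boundary terms to vanish, yielding $\textbf{\textit{h}}=0$ and $\partial\textbf{\textit{h}}/\partial\textbf{\textit{n}}=0$ on $\Gamma$. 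Combined with $\Delta\textbf{\textit{h}}=-\textbf{\textit{g}}$, a density/integration-by-parts computation for a generic $\textbf{\textit{v}}\in {H}_{-k-1}(\Delta;\Omega)$ then shows $L(\textbf{\textit{v}})=0$, completing the argument.

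I expect the main obstacle to be the rigorous justification of the double integration by parts and the precise identification of the boundary terms, because the Green formula \eqref{FG1} and the duality \eqref{formule d'integration dans T^2} only give meaning to \emph{some} of the traces directly; one must first establish the extra regularity $\textbf{\textit{h}}\in W^{2,2}_{k+1}(\Omega)$ via weighted elliptic estimates before the trace of $\partial\textbf{\textit{h}}/\partial\textbf{\textit{n}}$ is even defined. Controlling the behavior at infinity so that all weighted integrals converge and no contribution arises from large spheres $\partial B_R$ as $R\to\infty$ — rather than only from $\Gamma$ — is the delicate part, and this is exactly where the weighted-space isomorphism theorems are essential.
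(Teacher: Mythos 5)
Your overall strategy -- Hahn--Banach, representation of a vanishing functional by a pair $(\textbf{\textit{g}},\textbf{\textit{h}})\in W^{0,2}_{k+1}(\Omega)\times T_{k+1}(\Omega)$ via reflexivity, and identification of $\textbf{\textit{h}}$ through an elliptic equation -- is the right one; note that the paper itself gives no proof of this lemma and simply refers to~\cite{LMR_2020}, where the argument is of exactly this duality type. However, there is a genuine gap at your third step. From $\textbf{\textit{h}}\in T_{k+1}(\Omega)\subset W^{0,2}_{k-1}(\Omega)$ and $\Delta\textbf{\textit{h}}=-\textbf{\textit{g}}\in W^{0,2}_{k+1}(\Omega)$ \emph{in the exterior domain} $\Omega$ you cannot conclude $\textbf{\textit{h}}\in W^{2,2}_{k+1}(\Omega)$: the isomorphism theorems of~\cite{Amrouche_1994, Amrouche_JMPA_1997} concern boundary-value problems, and at this stage you have no usable boundary condition on $\textbf{\textit{h}}$. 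The only trace information inherited from $T_{k+1}(\Omega)$ is $\textbf{\textit{h}}\cdot\textbf{\textit{n}}=0$ and $\mathrm{div}\,\textbf{\textit{h}}=0$ on $\Gamma$, which is not a well-posed set of boundary conditions for the vector Laplacian, so $H^2$-regularity up to $\Gamma$ does not follow (an $L^2$ harmonic field near $\Gamma$ need not be $H^1$ up to $\Gamma$). Your subsequent ``double integration by parts to identify the boundary terms'' presupposes exactly the regularity and the existence of the trace of $\partial\textbf{\textit{h}}/\partial\textbf{\textit{n}}$ that you are trying to establish, so as ordered the argument is circular.

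The standard repair uses the hypothesis that $L$ vanishes on all of $\mathcal{D}(\overline{\Omega})$, not merely on $\mathcal{D}(\Omega)$, \emph{before} any regularity claim. For $\boldsymbol{\varphi}\in\mathcal{D}(\overline{\Omega})$ the pairing $\langle\Delta\boldsymbol{\varphi},\textbf{\textit{h}}\rangle$ is just $\int_{\Omega}\textbf{\textit{h}}\cdot\Delta\boldsymbol{\varphi}\,d\textbf{\textit{x}}$, so denoting by $\widetilde{\textbf{\textit{g}}},\widetilde{\textbf{\textit{h}}}$ the extensions of $\textbf{\textit{g}},\textbf{\textit{h}}$ by zero to $\R^3$, the identity $\int_{\R^3}\widetilde{\textbf{\textit{g}}}\cdot\boldsymbol{\varphi}\,d\textbf{\textit{x}}+\int_{\R^3}\widetilde{\textbf{\textit{h}}}\cdot\Delta\boldsymbol{\varphi}\,d\textbf{\textit{x}}=0$ holds for every $\boldsymbol{\varphi}\in\mathcal{D}(\R^3)$, i.e.\ $\Delta\widetilde{\textbf{\textit{h}}}=-\widetilde{\textbf{\textit{g}}}$ in $\mathcal{D}'(\R^3)$. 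Now the \emph{whole-space} weighted regularity results apply with no boundary issue and give $\widetilde{\textbf{\textit{h}}}\in W^{2,2}_{k+1}(\R^3)$; since $\widetilde{\textbf{\textit{h}}}$ vanishes identically on the open set $\Omega'$, the traces of $\textbf{\textit{h}}$ and $\nabla\textbf{\textit{h}}$ on $\Gamma$ from the $\Omega$ side vanish automatically, so there is no need to ``force the boundary terms to vanish'' by choosing traces of $\textbf{\textit{v}}$. One then approximates $\textbf{\textit{h}}$ by functions of $\mathcal{D}(\Omega)$ simultaneously in $W^{2,2}_{k+1}(\Omega)$ and in $T_{k+1}(\Omega)$ and passes to the limit in $\langle\Delta\textbf{\textit{v}},\textbf{\textit{h}}\rangle=\int_{\Omega}\textbf{\textit{v}}\cdot\Delta\textbf{\textit{h}}\,d\textbf{\textit{x}}$ to get $L\equiv0$. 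Your concluding step is fine once this reordering is made; without it, the central regularity claim is unsupported.
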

\noindent The proofs of Lemma~\ref{caractérisation T^2} and Lemma~\ref{desnité dans HDelta} can be found in \cite{LMR_2020}. Finally, in order to write a Green formula, we define for $k\in\Z$:
\begin{eqnarray*}
{S}^{}_{k+1}(\Omega)=\Big\lbrace\textbf{\textit{v}}\in W^{2,2}_{k+1}(\Omega);\quad \mathrm{div}\,\textbf{\textit{v}}=0,\,\,\ \textbf{\textit{v}}\cdot\textbf{\textit{n}}=0,\,\, 2[\textbf{D}(\textbf{\textit{v}})\textbf{\textit{n}}]_{\tau}+\alpha\textbf{\textit{v}}_{\tau}=0\,\,\text{on}\,\,\Gamma \Big\rbrace
\end{eqnarray*}
The next lemma will help us to prove a trace result.
\begin{lemm}\label{trace Navier p=2}
 Let $\alpha$ belongs to $W^{1,\infty}(\Gamma)$. The linear mapping $\Theta$ : $\textbf{\textit{u}}\mapsto 2[\textbf{D}(\textbf{\textit{u}})\textbf{\textit{n}}]_{\tau}+\alpha\textbf{\textit{u}}_{\tau}$ defined on $\mathcal{D}(\overline{\Omega})$ can be extended to a linear and continuous mapping still denoted by $\Theta$, from ${H}_{-k-1}(\Delta;\Omega)$ into $H^{-3/2}(\Gamma)$ and we have the following Green's formula: for any $\textbf{\textit{u}}\in {H}_{-k-1}(\Delta;\Omega)$ and $\boldsymbol{\varphi}\in {S}_{k+1}(\Omega)$,
\begin{eqnarray}\label{Green formula very weak p=2}
\nonumber\langle\Delta\,\textbf{\textit{u}},\boldsymbol{\varphi}\rangle_{(T^{2}_{k+1}(\Omega))'\times T^{2}_{k+1}(\Omega)}&=&\displaystyle\int_{\Omega} \textbf{\textit{u}}\cdot \Delta\,\boldsymbol{\varphi} d\textbf{x}+\langle 2[\textbf{D}(\textbf{\textit{u}})\textbf{\textit{n}}]_{\tau}+\alpha\textbf{\textit{u}}_{\tau},\boldsymbol{\varphi}\rangle_{H^{-3/2}(\Gamma)\times H^{3/2}(\Gamma)}\\
 && -\langle 2[\textbf{D}(\boldsymbol{\varphi})\textbf{\textit{n}}]\cdot\textbf{\textit{n}},\textbf{\textit{u}}\cdot\textbf{\textit{n}}\rangle_{H^{1/2}(\Gamma)\times H^{-1/2}(\Gamma)}.
\end{eqnarray}
\end{lemm}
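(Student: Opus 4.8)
The plan is to prove the lemma in the classical $\Rightarrow$ density scheme: first establish the identity \eqref{Green formula very weak p=2} for smooth $\textbf{\textit{u}}$ (where the left-hand pairing reduces to an honest integral), then read that identity as a definition of the boundary functional $\Theta(\textbf{\textit{u}})$, bound it in $H^{-3/2}(\Gamma)$, and finally extend everything to $H_{-k-1}(\Delta;\Omega)$ by the density of $\mathcal{D}(\overline{\Omega})$ provided by Lemma~\ref{desnité dans HDelta}.

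First I would derive the Green formula for $\textbf{\textit{u}}\in\mathcal{D}(\overline{\Omega})$ and $\boldsymbol{\varphi}\in S_{k+1}(\Omega)$. Starting from the pointwise identity $\Delta\textbf{\textit{u}}=2\,\mathrm{div}\,\textbf{D}(\textbf{\textit{u}})-\nabla(\mathrm{div}\,\textbf{\textit{u}})$ and integrating by parts, using the symmetry of $\textbf{D}(\textbf{\textit{u}})$ so that $\textbf{D}(\textbf{\textit{u}}):\nabla\boldsymbol{\varphi}=\textbf{D}(\textbf{\textit{u}}):\textbf{D}(\boldsymbol{\varphi})$, one obtains
$$
\int_\Omega \Delta\textbf{\textit{u}}\cdot\boldsymbol{\varphi}\,d\textbf{\textit{x}}
=-2\int_\Omega \textbf{D}(\textbf{\textit{u}}):\textbf{D}(\boldsymbol{\varphi})\,d\textbf{\textit{x}}
+2\langle \textbf{D}(\textbf{\textit{u}})\textbf{\textit{n}},\boldsymbol{\varphi}\rangle_\Gamma
+\int_\Omega (\mathrm{div}\,\textbf{\textit{u}})(\mathrm{div}\,\boldsymbol{\varphi})\,d\textbf{\textit{x}}
-\langle\mathrm{div}\,\textbf{\textit{u}},\boldsymbol{\varphi}\cdot\textbf{\textit{n}}\rangle_\Gamma.
$$
Writing the analogous identity with the roles of $\textbf{\textit{u}}$ and $\boldsymbol{\varphi}$ interchanged and subtracting, the symmetric strain and divergence volume integrals cancel, while the divergence boundary terms vanish because $\boldsymbol{\varphi}\in S_{k+1}(\Omega)$ satisfies $\mathrm{div}\,\boldsymbol{\varphi}=0$ and $\boldsymbol{\varphi}\cdot\textbf{\textit{n}}=0$. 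It then remains to split the two strain boundary pairings into tangential and normal parts via \eqref{decomposition.trace}: since $\boldsymbol{\varphi}=\boldsymbol{\varphi}_\tau$ on $\Gamma$ and, substituting the homogeneous Navier condition $2[\textbf{D}(\boldsymbol{\varphi})\textbf{\textit{n}}]_\tau=-\alpha\boldsymbol{\varphi}_\tau$ satisfied by $\boldsymbol{\varphi}$, the term $\langle\alpha\textbf{\textit{u}}_\tau,\boldsymbol{\varphi}\rangle_\Gamma$ is produced and one arrives exactly at \eqref{Green formula very weak p=2}.

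Next I would use this identity to estimate $\Theta(\textbf{\textit{u}})$. Each term on the right is controlled by $\|\textbf{\textit{u}}\|_{H_{-k-1}(\Delta;\Omega)}$: the pairing $\langle\Delta\textbf{\textit{u}},\boldsymbol{\varphi}\rangle_{(T_{k+1}(\Omega))'\times T_{k+1}(\Omega)}$ by $\|\Delta\textbf{\textit{u}}\|_{(T_{k+1}(\Omega))'}\,\|\boldsymbol{\varphi}\|_{T_{k+1}(\Omega)}$, the integral $\int_\Omega\textbf{\textit{u}}\cdot\Delta\boldsymbol{\varphi}\,d\textbf{\textit{x}}$ by the weighted duality between $W^{0,2}_{-k-1}(\Omega)$ and $W^{0,2}_{k+1}(\Omega)$, and the last boundary term by $\|\textbf{\textit{u}}\cdot\textbf{\textit{n}}\|_{H^{-1/2}(\Gamma)}\,\|2[\textbf{D}(\boldsymbol{\varphi})\textbf{\textit{n}}]\cdot\textbf{\textit{n}}\|_{H^{1/2}(\Gamma)}$, once it is checked that $\textbf{\textit{u}}\cdot\textbf{\textit{n}}\in H^{-1/2}(\Gamma)$ is itself controlled by $\|\textbf{\textit{u}}\|_{H_{-k-1}(\Delta;\Omega)}$. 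To convert the bound against the trace $\boldsymbol{\varphi}|_\Gamma$ into the genuine $H^{-3/2}(\Gamma)$ norm, given an arbitrary tangential $\boldsymbol{\mu}\in H^{3/2}(\Gamma)$ I would construct a test field $\boldsymbol{\varphi}\in S_{k+1}(\Omega)$ with $\boldsymbol{\varphi}_\tau=\boldsymbol{\mu}$ and $\|\boldsymbol{\varphi}\|_{W^{2,2}_{k+1}(\Omega)}\le C\|\boldsymbol{\mu}\|_{H^{3/2}(\Gamma)}$: the trace theorem in $W^{2,2}_{k+1}(\Omega)$ furnishes an extension whose tangential trace is $\boldsymbol{\mu}$ and whose normal derivative is fixed through \eqref{tenseur de déformation sur le bord} so as to enforce the homogeneous Navier condition, and a divergence correction together with the exterior Stokes--Navier solvability of \cite{DMR-2019} renders it solenoidal with the correct decay at infinity without altering the Cauchy data. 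This yields $\|\Theta(\textbf{\textit{u}})\|_{H^{-3/2}(\Gamma)}\le C\|\textbf{\textit{u}}\|_{H_{-k-1}(\Delta;\Omega)}$ for smooth $\textbf{\textit{u}}$.

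Finally, since $\mathcal{D}(\overline{\Omega})$ is dense in $H_{-k-1}(\Delta;\Omega)$ by Lemma~\ref{desnité dans HDelta}, the operator $\Theta$ extends uniquely and continuously to all of $H_{-k-1}(\Delta;\Omega)$, and passing to the limit in the smooth Green formula — every term of which is continuous for the $H_{-k-1}(\Delta;\Omega)$ topology — delivers \eqref{Green formula very weak p=2} in full generality. I expect the construction of the lifting $\boldsymbol{\varphi}\in S_{k+1}(\Omega)$ with controlled norm to be the main obstacle, since it must simultaneously realize a prescribed tangential trace, meet the homogeneous Navier condition, be divergence-free, and decay correctly in the weighted exterior setting; reconciling these constraints is precisely where the stationary exterior Stokes theory with Navier conditions is indispensable. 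A secondary but necessary point is to justify rigorously that $\textbf{\textit{u}}\cdot\textbf{\textit{n}}$ is well defined in $H^{-1/2}(\Gamma)$ and controlled for $\textbf{\textit{u}}\in H_{-k-1}(\Delta;\Omega)$, so that the last pairing in \eqref{Green formula very weak p=2} makes sense.
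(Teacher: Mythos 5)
Your proposal follows essentially the same route as the paper: derive the Green formula for smooth $\textbf{\textit{u}}$ from the identity $\Delta\textbf{\textit{u}}=2\,\mathrm{div}\,\textbf{D}(\textbf{\textit{u}})-\nabla\,\mathrm{div}\,\textbf{\textit{u}}$ and the homogeneous Navier condition satisfied by $\boldsymbol{\varphi}$, then dualize by lifting an arbitrary $\boldsymbol{\mu}\in H^{3/2}(\Gamma)$ to a test field in $S_{k+1}(\Omega)$ with prescribed trace and normal derivative and controlled $W^{2,2}_{k+1}(\Omega)$ norm, and conclude by density of $\mathcal{D}(\overline{\Omega})$ in $H_{-k-1}(\Delta;\Omega)$. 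The only notable difference is that you explicitly flag the need for a divergence correction to make the lifting solenoidal throughout $\Omega$ (the paper only checks $\mathrm{div}\,\boldsymbol{\varphi}=0$ on $\Gamma$) and the need to justify the normal trace $\textbf{\textit{u}}\cdot\textbf{\textit{n}}\in H^{-1/2}(\Gamma)$, both of which are legitimate refinements rather than deviations.
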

\begin{proof}
 The goal is to prove that the mapping~$\Theta$ defined on 
$\mathcal{D}(\overline{\Omega})$ is continuous for the norm of ${H}^{}_{-k-1}(\Delta ;\Omega)$. Let $\textbf{\textit{u}}\in\mathcal{D}(\overline{\Omega})$ and $\boldsymbol{\varphi}\in {S}^{}_{k+1}(\Omega)$. Then thanks to the following identity
\begin{equation}\label{delta2}
\Delta\,\textbf{\textit{u}}=2\mathrm{div}\,\mathrm{\textbf{D}}(\textbf{\textit{u}})-\nabla\,\mathrm{div}\,\textbf{\textit{u}}
\end{equation}
\noindent and the Green formula \eqref{FG1}, we have 
\begin{eqnarray*}\label{calcule delta phi}
\nonumber\left\langle\Delta\,\textbf{\textit{u}},\boldsymbol{\varphi} \right\rangle _{(T^{2}_{k+1}(\Omega))'\times T^{2}_{k+1}(\Omega))}=-2\displaystyle\int_{\Omega}\mathrm{\textbf{D}}(\textbf{\textit{u}}):
\mathbf{\nabla}\boldsymbol{\varphi}\,d\textbf{\textit{x}}
+\displaystyle\int_{\Omega}\mathrm{div}\,\textbf{\textit{u}}\,\mathrm{div}\,\boldsymbol{\varphi} d\textbf{\textit{x}}+2\langle\mathrm{\textbf{D}}(\textbf{\textit{u}})\textbf{\textit{n}},\boldsymbol{\varphi}\rangle_{\Gamma}\\
\end{eqnarray*}

\noindent Next, the fact that $\boldsymbol{\varphi}\cdot\textbf{\textit{n}}=0$ on $\Gamma$ also implies that
\begin{equation*}
\label{tenseur de deformation}
\langle\mathrm{\textbf{D}}(\textbf{\textit{u}})\textbf{\textit{n}},\boldsymbol{\varphi}\rangle_{\Gamma} =
\left\langle\big(\big[\textbf{D}(\textbf{\textit{u}})\textbf{\textit{n}}\big]\cdot\textbf{\textit{n}}\big)\textbf{\textit{n}}+ 
[\textbf{D}(\textbf{\textit{u}})\textbf{\textit{n}}]_{\tau},\boldsymbol{\varphi}\right\rangle_\Gamma
=\langle\mathrm{[\textbf{D}}(\textbf{\textit{u}})\textbf{\textit{n}}]_{\tau},\boldsymbol{\varphi}\rangle_{\Gamma}.
\end{equation*}

\noindent It follows that, for any $\textbf{\textit{u}}\in \mathcal{D}(\overline{\Omega})$ and  $\boldsymbol{\varphi}\in {S}_{k+1}(\Omega)$, we have

$$-\left\langle\Delta\,\textbf{\textit{u}},\boldsymbol{\varphi} \right\rangle _{\Omega}=2\displaystyle\int_{\Omega}\mathrm{\textbf{D}}(\textbf{\textit{u}})
:\nabla\boldsymbol{\varphi}\,d\textbf{\textit{x}}-2\left\langle[\mathrm{\textbf{D}}(\textbf{\textit{u}})\textbf{\textit{n}}]_{\tau},\boldsymbol{\varphi} \right\rangle_{\Gamma}.$$

\noindent Since we have 

$$\int_\Omega\textbf{D}(\textit{\textbf{u}}):\nabla\boldsymbol{\varphi}\,d\textit{\textbf{x}}=
\int_\Omega\textbf{D}(\textit{\textbf{u}}):\textbf{D}(\boldsymbol{\varphi})\,d\textit{\textbf{x}},$$
Then, we obtain
\begin{eqnarray}\label{calcule delta phi}
\left\langle\Delta\,\textbf{\textit{u}},\boldsymbol{\varphi} \right\rangle _{(T^{2}_{k+1}(\Omega))'\times T^{2}_{k+1}(\Omega))}=\displaystyle\int_{\Omega}\textbf{\textit{u}}\cdot\Delta\,\boldsymbol{\varphi} d\textbf{x}+\left\langle 2[\mathrm{\textbf{D}}(\textbf{\textit{u}})\textbf{\textit{n}}]_{\tau},\boldsymbol{\varphi} \right\rangle_{\Gamma}-\left\langle 2\mathrm{\textbf{D}}(\boldsymbol{\varphi})\textbf{\textit{n}},\textbf{\textit{u}} \right\rangle_{\Gamma}.
\end{eqnarray}
\noindent Now, since $2[\textbf{D}(\boldsymbol{\varphi})\textbf{\textit{n}}]_{\tau}=-\alpha\boldsymbol{\varphi}_{\tau}$ on $\Gamma$, we have
\begin{eqnarray*}
-2\langle\textbf{D}(\boldsymbol{\varphi})\textbf{\textit{n}},\textbf{\textit{u}}\rangle=\langle\alpha\boldsymbol{\varphi}_{\tau},\textbf{\textit{u}}_{\tau}\rangle_{\Gamma}-2\langle[\textbf{D}(\boldsymbol{\varphi})\textbf{\textit{n}}]\cdot\textbf{\textit{n}},\textbf{\textit{u}}\cdot\textbf{\textit{n}} \rangle_{\Gamma}
\end{eqnarray*}
Plunging this in~\eqref{calcule delta phi}, for any $\textbf{\textit{u}}\in \mathcal{D}(\overline{\Omega})$ and $\boldsymbol{\varphi}\in {S}_{k+1}(\Omega)$ we have
\begin{eqnarray*}
\label{formule de green}
\left\langle\Delta\,\textbf{\textit{u}},\boldsymbol{\varphi} \right\rangle _{(T^{2}_{k+1}(\Omega))'\times T^{2}_{k+1}(\Omega))}&=&\displaystyle\int_{\Omega}\textbf{\textit{u}}\cdot\Delta\,\boldsymbol{\varphi} d\textbf{x}+\left\langle 2[\mathrm{\textbf{D}}(\textbf{\textit{u}})\textbf{\textit{n}}]_{\tau}+\alpha\textbf{\textit{u}}_{\tau},\boldsymbol{\varphi} \right\rangle_{H^{-3/2}(\Gamma)\times H^{3/2}(\Omega)}\\
&&-2\langle [\textbf{D}(\boldsymbol{\varphi})\textbf{\textit{n}}]\cdot\textbf{\textit{n}},\textbf{\textit{u}}\cdot\textbf{\textit{n}}\rangle_{H^{1/2}(\Gamma)\times H^{-1/2}(\Gamma)}.
\end{eqnarray*}

\noindent Let now $\boldsymbol{\mu}$ be any element of $H^{3/2}(\Gamma)$. Since $\alpha\in W^{1,\infty}(\Gamma)$, then $\alpha\boldsymbol{\mu}_{\tau}\in H^{1/2}(\Gamma)$. So there exists an element $\boldsymbol{\varphi}$ in $W^{2,2}_{k+1}(\Omega)$ such that
$$ \boldsymbol{\varphi}=\boldsymbol{\mu}_{\tau}\quad\text{ and } \quad \dfrac{\partial\boldsymbol{\varphi}}{\partial\textbf{\textit{n}}}=\Lambda\,\boldsymbol{\mu}-\textbf{\textit{n}}\,\mathrm{div}\,_{\Gamma}\boldsymbol{\mu}_{\tau}-\alpha\boldsymbol{\mu}_{\tau}\quad\text{ on }\,\, \Gamma. $$
Where \begin{eqnarray*}
\Lambda\,\boldsymbol{\mu}=\sum_{k=1}^{2}\left(\boldsymbol{\mu}_{\tau}\cdot\frac{\partial\textbf{\textit{n}}}{\partial s_{k}} \right) \boldsymbol{\tau}_{k}.
\end{eqnarray*}
As $\Lambda\,\boldsymbol{\mu}\cdot\textbf{\textit{n}}=0$ on $\Gamma$, we have 
\begin{eqnarray*}
\dfrac{\partial\boldsymbol{\varphi}}{\partial\textbf{\textit{n}}}\cdot\textbf{\textit{n}}=-\mathrm{div}_{\Gamma}\,\boldsymbol{\mu}_{\tau}\quad\text{ on}\,\,\Gamma
\end{eqnarray*}
and we recall the following formula (see \cite{Amrouche_CMJ_1994}):
\begin{eqnarray}\label{div sur trace p=2}
\mathrm{div}\textbf{\textit{v}}=\mathrm{div}_{\Gamma}\,\textbf{\textit{v}}_{\tau}+\boldsymbol{\beta}\,\textbf{\textit{v}}\cdot\textbf{\textit{n}}+\dfrac{\partial\,\textbf{\textit{v}}}{\partial\,\textbf{\textit{n}}}\cdot\textbf{\textit{n}}\quad\text{ on }\,\,\Gamma,
\end{eqnarray}
where $\boldsymbol{\beta}$ denotes the mean curvature of $\Gamma$, $\mathrm{div}_{\Gamma}$ is the surface divergence.
 We deduce that
\begin{eqnarray*}
\mathrm{div}\,\boldsymbol{\varphi}=0\quad\text{ on }\,\,\Gamma.
\end{eqnarray*}
 Using relations~\eqref{tenseur de déformation sur le bord}, we have
\begin{eqnarray*}
2[\mathrm{\textbf{D}}(\boldsymbol{\varphi})\textbf{\textit{n}}]_{\tau}+\alpha\boldsymbol{\varphi}_{\tau}&=&\left( \frac{\partial \boldsymbol{\varphi}}{\partial \textbf{\textit{n}}}\right)_{\tau}-\Lambda \boldsymbol{\varphi}+\alpha\boldsymbol{\varphi}_{\tau}.
\end{eqnarray*}
Since
$\dfrac{\partial\boldsymbol{\varphi}}{\partial\textbf{\textit{n}}}=\Lambda\,\boldsymbol{\mu}-\textbf{\textit{n}}\,\mathrm{div}\,_{\Gamma}\boldsymbol{\mu}_{\tau}-\alpha\boldsymbol{\mu}_{\tau}$ on  $\Gamma$ and  $\boldsymbol{\varphi}=\boldsymbol{\mu}_{\tau}$ on $\Gamma$, then we have
\begin{eqnarray*}
\Big(\dfrac{\partial\boldsymbol{\varphi}}{\partial\textbf{\textit{n}}}\Big)_{\tau}&=&(\Lambda\,\boldsymbol{\mu})_{\tau}-\alpha\boldsymbol{\mu}_{\tau}\\
&=& \Lambda\,\boldsymbol{\varphi}-\alpha\boldsymbol{\varphi}_{\tau}\\
\end{eqnarray*}

\noindent We deduce that $2[\mathrm{\textbf{D}}(\boldsymbol{\varphi})\textbf{\textit{n}}]_{\tau}+\alpha\boldsymbol{\varphi}_{\tau}=0$ on $\Gamma$. Which implies that the function $\boldsymbol{\varphi}$ belongs to ${S}^{}_{k+1}(\Omega)$ and satisfies:
\begin{eqnarray*}
\begin{cases}
\boldsymbol{\varphi}=\mu_{\tau}\,\,\,\qquad\qquad\quad\qquad\qquad\quad\qquad\text{ on }\,\,\,\Gamma ,\\
\nabla\,\boldsymbol{\varphi}\cdot\textbf{\textit{n}}=\Lambda\,\mu-\textbf{\textit{n}}\,\mathrm{div}\,_{\Gamma}\mu_{\tau}-\alpha\boldsymbol{\mu}_{\tau}\qquad\text{ on }\,\,\, \Gamma.
\end{cases}
\end{eqnarray*}
\noindent In addition, we have the following estimate
\begin{eqnarray}\label{estimation trace}
\Vert \boldsymbol{\varphi}\Vert_{W^{2,2}_{k+1}(\Omega)}\leqslant C\Vert{\boldsymbol{\mu}}_{\tau}\Vert_{H^{3/2}(\Gamma)}\leqslant C\Vert\boldsymbol{\mu}\Vert_{H^{3/2}(\Gamma)}.
\end{eqnarray}

\noindent Consequently,
\begin{eqnarray*}
\Big\vert \left\langle 2[\mathrm{\textbf{D}}(\textbf{\textit{u}})\textbf{\textit{n}}]_{\tau}+\alpha\textbf{\textit{u}}_{\tau},\boldsymbol{\mu} \right\rangle_{\Gamma}\Big\vert 
&=&\Big\vert \left\langle 2[\mathrm{\textbf{D}}(\textbf{\textit{u}})\textbf{\textit{n}}]_{\tau}+\alpha\textbf{\textit{u}}_{\tau},\boldsymbol{\mu}_{\tau} \right\rangle_{\Gamma}\Big\vert
=\Big\vert \left\langle 2 [\mathrm{\textbf{D}}(\textbf{\textit{u}})\textbf{\textit{n}}]_{\tau}+\alpha\textbf{\textit{u}}_{\tau},\boldsymbol{\varphi} \right\rangle_{\Gamma}\Big\vert\\
&\leqslant & \Big\vert\left\langle\Delta\,\textbf{\textit{u}},\boldsymbol{\varphi} \right\rangle _{\Omega}\Big\vert+\Big\vert\displaystyle\int_{\Omega}\textbf{\textit{u}}\cdot\Delta\,\boldsymbol{\varphi} d\textbf{x}-2\langle[\textbf{D}(\boldsymbol{\varphi})\textbf{\textit{n}}]\cdot\textbf{\textit{n}},\textbf{\textit{u}}\cdot\textbf{\textit{n}}\rangle_{\Gamma}\Big\vert\\
&\leqslant & \Vert\Delta\,\textbf{\textit{u}}\Vert_{(T_{k+1}^{}(\Omega))'}\Vert \boldsymbol{\varphi}\Vert_{T_{k+1}^{}(\Omega)}
+\Vert\textbf{\textit{u}}\Vert_{W^{0,2}_{-k-1}(\Omega)}\Vert \boldsymbol{\varphi}\Vert_{W^{2,2}_{k+1}(\Omega)}\\
&\leqslant & C\Vert\textbf{\textit{u}}\Vert_{{H}_{-k-1}(\Delta ;\Omega)}\Vert\boldsymbol{\varphi}\Vert_{W^{2,2}_{k+1}(\Omega)}
\end{eqnarray*}

\noindent Thus, using \eqref{estimation trace}, we obtain that for any $\textbf{\textit{u}}\in \mathcal{D}(\overline{\Omega})$:

\begin{eqnarray*}
\Vert 2[\mathrm{\textbf{D}}(\textbf{\textit{u}})\textbf{\textit{n}}]_{\tau}+\alpha\textbf{\textit{u}}_{\tau}\Vert_{H^{-3/2}(\Gamma)}\leqslant C\Vert \textbf{\textit{u}}\Vert_{{H}^{}_{-k-1}(\Delta ;\Omega)}.
\end{eqnarray*}

\noindent Therefore, the linear mapping $\Theta\, :\, \textbf{\textit{u}}\rightarrow {2[\mathrm{\textbf{D}}(\textbf{\textit{u}})\textbf{\textit{n}}]_{\tau}+\alpha\textbf{\textit{u}}_{\tau}}$ defined in 
$\mathcal{D}(\overline{\Omega})$ is continuous for the norm of ${H}^{}_{-k-1}(\Delta ;\Omega)$. Finally, by density of $\mathcal{D}(\overline{\Omega})$ in ${H}^{}_{-k-1}(\Delta ;\Omega)$, we can extend this mapping from ${H}_{-k-1}^{}(\Delta ;\Omega)$ into $H^{-3/2}(\Gamma)$ and formula~\eqref{Green formula very weak p=2} holds.
\end{proof}
\section{Very weak solution}\label{solution-trés-faible-p=2}
\noindent In this section, we consider the Stokes problem with Navier slip boundary conditions:
\begin{eqnarray*}
(\mathcal{S}_{T})
\begin{cases}
&-\Delta\,\textbf{\textit{u}}+\nabla\,{\pi}=\textbf{\textit{f}}\quad\text{and}\quad
\mathrm{div}\,\textbf{\textit{u}}={\chi}\quad\text{in}\quad\Omega,\\
&\textbf{\textit{u}}\cdot \textbf{\textit{n}}=g\quad\text{and}\quad
2[\mathrm{\textbf{D}}(\textbf{\textit{u}})\textbf{\textit{n}}]_{\tau}+\alpha\textbf{\textit{u}}_{\tau}=\textbf{\textit{h}}\quad\text{ on }\quad\Gamma.
\end{cases}
\end{eqnarray*}
\noindent Our aim is to investigate the existence and the uniqueness of a very weak solutions belongs to $W^{0,2}_{-k-1}(\Omega)\times W^{-1,2}_{-k-1}(\Omega)$, where $k\in \Z$ for the Stokes problem $(\mathcal{S}_T)$. The main idea consists in the use of a duality argument with the strong solutions obtained in~\cite{DMR-2019}.  We assume that $\alpha$ is a positive function that belongs to $W^{1,\infty}(\Gamma)$.\\

\noindent Before stating the theorem of the existence and the uniqueness of the very weak solution for Stokes problem, we need to introduce the following null spaces for $s\in \left\lbrace 1,2\right\rbrace $ and $k \in \Z $:
\begin{align*}
\mathcal{N}_{k}^{s,2}(\Omega)=\left\lbrace (\textbf{\textit{u}},\pi) \in {W}_{k}^{s,2}(\Omega) \times W_{k}^{s-1,2}(\Omega);
\,\, T(\textbf{\textit{u}},\pi)=(\boldsymbol{0},0) \,\,\,\mathrm{in} \,\,\,\Omega  \quad\mathrm{and}\quad \textbf{\textit{u}}\cdot\textbf{\textit{n}}=0,\quad 2[\mathrm{\textbf{D}}(\textbf{\textit{u}})\textbf{\textit{n}}]_{\tau}+\alpha\textbf{\textit{u}}_{\tau}=\boldsymbol{0}
\,\,\,\text{on}\,\,\,\Gamma \right\rbrace,
\end{align*}
with 
\begin{equation*}
 T(\textbf{\textit{u}},\pi)=(-\Delta\,\textbf{\textit{u}}+\nabla\,\pi,\,\,\mathrm{div}\,\textbf{\textit{u}}).
\end{equation*}
For all $k\in\Z$, we introduce the following spaces:
\begin{eqnarray*}
N_{k}=\Big\lbrace(\boldsymbol{\lambda},\mu)\in \mathcal{P}_{k}\times\mathcal{P}^{\Delta}_{k-1},\,\,\,-\Delta\,\boldsymbol{\lambda}+\nabla\,\mu
=\boldsymbol{0}\,\,\text{ and }\,\, \mathrm{div}\,\boldsymbol{\lambda}=0\Big\rbrace 
\end{eqnarray*}
that is the null space of the Stokes operator in the whole space $\R^3$, we recall that by agreement on the notation $\mathcal{P}_{k}$, the space $N_{k}=\lbrace (\boldsymbol{0},0)\rbrace$ when $k<0$ and $N_{0}=\mathcal{P}_{0}\times\lbrace 0\rbrace$.\\

\noindent The next proposition characterizes the kernel of $\mathcal{N}_{k}^{1,2}(\Omega)$:

\begin{prop}\label{caracterisation du noyau dans non borne}
Suppose that $\Omega$ is of class $\mathcal{C}^{1,1}$ and assume $k\in\Z$. 
\begin{itemize}
\item[$\bullet$] If $k\ge0$, then $\mathcal{N}_{k}^{1,2}(\Omega)=\big\lbrace (\boldsymbol{0},0)\big\rbrace$.
\item[$\bullet$] If $k<0$, then $\mathcal{N}_{k}^{1,2}(\Omega)=\left\lbrace  (\textbf{\textit{v}}-\boldsymbol{\lambda},{\theta}-\mu); \quad (\boldsymbol{\lambda},\mu)\in N_{-k-1}\right\rbrace $, 
where $(\textbf{\textit{v}},{\theta})\in W^{1,2}_{0}(\Omega)\times{{L}}^{2}_{}(\Omega)$ is the unique solution of the following problem:
\begin{equation}
\label{problem stokes du noyau}
\begin{cases}
-\Delta\,\textbf{\textit{v}}+\nabla\,{\theta}=\boldsymbol{0}\quad\text{and}\quad
\mathrm{div}\,\textbf{\textit{v}}=0\quad\text{in}\quad\Omega,\\[4pt]
\textbf{\textit{v}}\cdot \textbf{\textit{n}}=\boldsymbol{\lambda}\cdot\textbf{\textit{n}}\quad\text{and}\quad
2[\mathrm{\textbf{D}}(\textbf{\textit{v}})\textbf{\textit{n}}]_{\tau}+\alpha\textbf{\textit{v}}_{\tau}=
2[\mathrm{\textbf{D}}(\boldsymbol{\lambda})\textbf{\textit{n}}]_{\tau}+\alpha\boldsymbol{\lambda}_{\tau}\quad\text{on}\quad\Gamma.
\end{cases}
\end{equation}
\end{itemize}
\end{prop}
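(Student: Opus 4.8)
The plan is to treat the two ranges of $k$ separately, using an energy identity when $k\ge0$ and a reduction to polynomial Stokes fields when $k<0$. For $k\ge0$ I would take $(\u,\pi)\in\mathcal{N}_k^{1,2}(\Omega)$ and test the equation $-\Delta\u+\nabla\pi=\0$ against $\u$ itself. Using the identity \eqref{delta2}, the constraint $\mathrm{div}\,\u=0$, and integrating by parts over the truncated domain $\Omega_R$ (the surface integrals on $\partial B_R$ tending to $0$ along a suitable sequence $R\to\infty$ thanks to the weighted integrability $\rho^{k-1}\u,\rho^{k}\nabla\u\in L^2$), the pressure term drops out because $\mathrm{div}\,\u=0$ and $\u\cdot\nn=0$, while the viscous term yields, after inserting the boundary condition $2[\textbf{D}(\u)\nn]_\tau=-\alpha\u_\tau$,
\begin{equation*}
2\int_\Omega|\textbf{D}(\u)|^2\,d\x+\int_\Gamma\alpha\,|\u_\tau|^2\,d\sigma=0.
\end{equation*}
Since $\alpha>0$, both terms vanish; hence $\textbf{D}(\u)=\0$, so $\u$ is an infinitesimal rigid motion, i.e. a polynomial of degree $\le1$. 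By item~4 of Properties~\ref{prop.proprietes.espaces.poids} the only polynomial lying in $W_k^{1,2}(\Omega)$ with $k\ge0$ is $\0$ (as $\mathcal{P}_{-1-k}=\{0\}$), so $\u=\0$; then $\nabla\pi=\0$ forces $\pi$ constant, and no nonzero constant belongs to $W_k^{0,2}(\Omega)$ for $k\ge0$, whence $\pi=0$. This gives $\mathcal{N}_k^{1,2}(\Omega)=\{(\0,0)\}$.

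For $k<0$ I would first check the inclusion $\supseteq$. Given $(\boldsymbol{\lambda},\mu)\in N_{-k-1}$, the boundary data $\boldsymbol{\lambda}\cdot\nn$ and $2[\textbf{D}(\boldsymbol{\lambda})\nn]_\tau+\alpha\boldsymbol{\lambda}_\tau$ are admissible, so \eqref{problem stokes du noyau} admits a unique variational solution $(\v,\theta)\in W_0^{1,2}(\Omega)\times L^2(\Omega)$ (well-posedness following from the coercivity of the Navier bilinear form for $\alpha>0$, equivalently from the triviality of the kernel in the case $k=0$ just proved; see also~\cite{DMR-2019}). By linearity $(\u,\pi):=(\v-\boldsymbol{\lambda},\theta-\mu)$ solves the homogeneous Stokes system with homogeneous Navier conditions, and since $\v\in W_0^{1,2}(\Omega)\subset W_k^{1,2}(\Omega)$ while $\boldsymbol{\lambda}\in\mathcal{P}_{-k-1}$ is exactly the polynomial space contained in $W_k^{1,2}(\Omega)$, one has $\u\in W_k^{1,2}(\Omega)$; similarly $\theta\in L^2(\Omega)\subset W_k^{0,2}(\Omega)$ and $\mu\in\mathcal{P}_{-k-2}\subset W_k^{0,2}(\Omega)$, so $\pi\in W_k^{0,2}(\Omega)$. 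Thus every such pair lies in $\mathcal{N}_k^{1,2}(\Omega)$. The parametrization is injective: if $\v-\boldsymbol{\lambda}=\0$ and $\theta-\mu=0$, then the polynomial $\boldsymbol{\lambda}$ would belong to $W_0^{1,2}(\Omega)$ and $\mu$ to $L^2(\Omega)$, forcing $\boldsymbol{\lambda}=\0$ and $\mu=0$.

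The reverse inclusion $\subseteq$ is the main obstacle. Given $(\u,\pi)\in\mathcal{N}_k^{1,2}(\Omega)$, I must extract a polynomial pair $(\boldsymbol{\lambda},\mu)\in N_{-k-1}$ such that $\u+\boldsymbol{\lambda}\in W_0^{1,2}(\Omega)$ and $\pi+\mu\in L^2(\Omega)$; setting $\v=\u+\boldsymbol{\lambda}$ and $\theta=\pi+\mu$ then exhibits $(\u,\pi)=(\v-\boldsymbol{\lambda},\theta-\mu)$, and by the uniqueness above $(\v,\theta)$ is precisely the solution of \eqref{problem stokes du noyau}. The content of this step is the asymptotic structure of the solution: away from $\Gamma$ the pair solves the homogeneous Stokes system, and elliptic regularity upgrades it to $W_k^{2,2}(\Omega)\times W_k^{1,2}(\Omega)$, so that one may invoke the weighted-space theory for the Stokes operator in exterior domains, which asserts that a solution lying in the space with index $k<0$ differs from a whole-space polynomial Stokes field in $N_{-k-1}$ by a remainder in $W_0^{1,2}(\Omega)\times L^2(\Omega)$ (see~\cite{DMR-2019}). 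Verifying that this remainder carries exactly the boundary data of \eqref{problem stokes du noyau} is then immediate by linearity, since $(\u,\pi)$ has homogeneous Navier data while the polynomial part supplies $\boldsymbol{\lambda}\cdot\nn$ and $2[\textbf{D}(\boldsymbol{\lambda})\nn]_\tau+\alpha\boldsymbol{\lambda}_\tau$.
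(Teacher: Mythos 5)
The paper does not actually prove Proposition~\ref{caracterisation du noyau dans non borne}: it states the result and refers the reader to~\cite{DMR-2019}, so there is no internal proof to compare against. Your sketch follows the route taken in that reference. The energy argument for $k\ge 0$ is complete and correct: testing against $\textbf{\textit{u}}$ is legitimate because $W_k^{1,2}(\Omega)\subset W_0^{1,2}(\Omega)$ for $k\ge 0$, the identity $2\int_\Omega|\textbf{D}(\textbf{\textit{u}})|^2\,d\textbf{\textit{x}}+\int_\Gamma\alpha|\textbf{\textit{u}}_\tau|^2\,d\sigma=0$ forces $\textbf{D}(\textbf{\textit{u}})=\boldsymbol{0}$, hence a rigid motion, and no nonzero polynomial of degree $\le 1$ lies in $W_k^{1,2}(\Omega)$ when $k\ge 0$ (nor any nonzero constant pressure in $W_k^{0,2}(\Omega)$). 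The inclusion $\supseteq$ for $k<0$, together with the injectivity of the parametrization, is also correctly handled.

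The one point to be clear-eyed about is that the substantive content of the proposition --- the reverse inclusion for $k<0$ --- is in your write-up delegated wholesale to ``the weighted-space theory for the Stokes operator in exterior domains.'' That is exactly where the work lies: one needs the regularity bootstrap, a cut-off decomposition separating a compactly supported part from a part supported near infinity, the whole-space isomorphism results of~\cite{Alliot_M3AS_1999} to extract the polynomial part and to check that it belongs to $N_{-k-1}$, and finally the uniqueness of the variational solution of~\eqref{problem stokes du noyau} to identify the remainder with $(\textbf{\textit{v}},\theta)$. Since the paper itself cites~\cite{DMR-2019} for the entire proposition, deferring this step is defensible, but you should be aware that you have not proved it. A minor imprecision: the elliptic bootstrap places a kernel element of $\mathcal{N}_k^{1,2}(\Omega)$ in $W_{k+1}^{2,2}(\Omega)\times W_{k+1}^{1,2}(\Omega)$ (compare Lemma~\ref{1.montrer egalite de noyaux}), not in $W_k^{2,2}(\Omega)\times W_k^{1,2}(\Omega)$ as you wrote; this does not affect the rest of the argument.
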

\noindent The proof of this Proposition can be found in~\cite{DMR-2019}.\\

\noindent The next theorem states an existence, uniqueness and regularity result for problem $(\mathcal{S}_{T})$ (for instance see~\cite{DMR-2019}).
\begin{theo}\label{solution forte}
Suppose that $\Omega$ is of class $\mathcal{C}^{2,1}$ and let $k\in\Z$.
Assume that $\textbf{\textit{f }}\in W^{0,2}_{k+1}(\Omega)$, $g\in H^{3/2}(\Gamma)$,  $\chi\in W^{1,2}_{k+1}(\Omega)$ and $\textbf{\textit{h }}\in H^{1/2}(\Gamma)$ 
satisfying \,\, $\textbf{\textit{h}}\cdot\textbf{\textit{n}}=0$ on $\Gamma$. Assume moreover that the following compatibility condition is satisfied
\begin{equation}\label{condition de compatibilite}
\forall (\boldsymbol{\xi},\eta)\in \mathcal{N}_{-k}^{1,2}(\Omega),\quad
\displaystyle\int_{\Omega} \textbf{\textit{f }}\cdot\boldsymbol{\xi}d\textbf{\textit{x}}-\displaystyle\int_{\Omega}
\chi\,{\eta}d\textbf{\textit{x}}=\langle g , 2[\textrm{\textbf{D}}(\boldsymbol{\xi})\textbf{\textit{n}}]\cdot\textbf{\textit{n}}-\eta\rangle_{\Gamma}-\langle\textbf{\textit{h}}, 
\boldsymbol{\xi}\rangle_{\Gamma}.\quad 
\end{equation}
\noindent Then, the Stokes problem $(\mathcal{S}_{T})$ has a solution
$(\textbf{\textit{u}},{\pi})\in W^{2,2}_{k+1}(\Omega)\times {W}^{1,2}_{k+1}(\Omega)$ unique up to an element of $\mathcal{N}_{k}^{1,2}(\Omega)$ and we have:
$$
\inf _{(\boldsymbol{\lambda},\mu)\in\mathcal{N}_{k}^{1,2}(\Omega)} \left( \Vert \textbf{\textit{u}}+\boldsymbol{\lambda}\Vert_{W^{2,2}_{k+1}(\Omega)}
+\Vert {\pi}+{\mu}\Vert_{{W}^{1,2}_{k+1}(\Omega)}\right)  
\leqslant  C\Big(\Vert \textbf{\textit{f }}\Vert_{W^{0,2}_{k+1}(\Omega)}+\Vert\chi\Vert_{W^{1,2}_{k+1}(\Omega)}+\Vert g\Vert_{H^{3/2}(\Gamma)}
+\Vert \textbf{\textit{h}}\Vert_{H^{1/2}(\Gamma)}\Big).$$
\end{theo}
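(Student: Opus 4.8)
The plan is to realize $(\mathcal{S}_T)$ as a Fredholm problem between the weighted spaces, to obtain an \emph{a priori} estimate by gluing a whole-space weighted estimate to a bounded-domain regularity estimate, and then to read off existence, the kernel and the compatibility condition from the closed-range theorem together with a duality argument based on the Green formula~\eqref{Green formula very weak p=2}.

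First I would reduce to homogeneous data. Using a Bogovskii-type operator adapted to the weighted spaces I would choose $\mathbf{u}_\chi\in W^{2,2}_{k+1}(\Omega)$ with $\mathrm{div}\,\mathbf{u}_\chi=\chi$ and $\mathbf{u}_\chi\cdot\mathbf{n}=0$ on $\Gamma$, and by a trace lifting I would choose $\mathbf{u}_b\in W^{2,2}_{k+1}(\Omega)$ realizing the two boundary data $g$ and $\mathbf{h}$ (this is possible since $g\in H^{3/2}(\Gamma)$ and $\mathbf{h}\in H^{1/2}(\Gamma)$ are precisely the admissible traces of a $W^{2,2}_{k+1}$ field, and $\mathbf{h}\cdot\mathbf{n}=0$ ensures $\mathbf{h}$ is a genuine tangential datum). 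Subtracting, $\mathbf{w}=\mathbf{u}-\mathbf{u}_\chi-\mathbf{u}_b$ solves a problem of the same type with $\chi=0$, $g=0$, $\mathbf{h}=\mathbf{0}$ and a modified force still lying in $W^{0,2}_{k+1}(\Omega)$.

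Next, for the homogeneous problem I would produce a variational solution in the energy case $k=0$. On the closed subspace $V=\{\mathbf{v}\in W^{1,2}_0(\Omega):\ \mathrm{div}\,\mathbf{v}=0,\ \mathbf{v}\cdot\mathbf{n}=0\text{ on }\Gamma\}$ I would use the bilinear form
\[
a(\mathbf{u},\mathbf{v})=2\int_\Omega \mathbf{D}(\mathbf{u}):\mathbf{D}(\mathbf{v})\,d\mathbf{x}+\int_\Gamma \alpha\,\mathbf{u}_\tau\cdot\mathbf{v}_\tau\,d\sigma .
\]
Continuity is immediate, and coercivity on $V$ follows from a weighted Korn inequality combined with $\alpha>0$, the boundary term controlling the tangential directions that Korn's inequality leaves uncontrolled in the exterior setting. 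Lax--Milgram then yields $\mathbf{u}\in V$, and the pressure is recovered by a de Rham-type argument in the weighted framework (the functional $\mathbf{v}\mapsto a(\mathbf{u},\mathbf{v})-\langle\mathbf{f},\mathbf{v}\rangle$ vanishes on divergence-free test fields, hence is a gradient), using the characterization of $\mathbf{H}_k^{-1}(\mathrm{div};\Omega)$ from Proposition~\ref{caracterisation du H'}.

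Finally, I would upgrade the regularity and pass to a general weight $k$ by localization. Writing $\mathbf{u}=\varphi\mathbf{u}+(1-\varphi)\mathbf{u}$ with $\varphi$ a cut-off equal to $1$ near $\Gamma$ and of compact support, the piece $\varphi\mathbf{u}$ solves a Stokes problem with Navier condition on a bounded collar of $\Gamma$, where classical $W^{2,2}$ boundary regularity applies, while $(1-\varphi)\mathbf{u}$ solves a whole-space Stokes system whose right-hand side is $\mathbf{f}$ plus commutator terms $[\Delta,\varphi]\mathbf{u}$, $(\nabla\varphi)\pi,\ldots$ supported in a bounded annulus. For the exterior piece I would invoke the known weighted isomorphism for the Stokes operator in $\R^3$, whose kernel is exactly $N_k$ and whose solvability is orthogonality to the finite-dimensional polynomial space $N_{-k-1}$, to obtain the $W^{2,2}_{k+1}\times W^{1,2}_{k+1}$ bound. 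Since the commutator couplings are lower order and locally supported, they are compact by Rellich, so the combined estimate
\[
\inf_{(\boldsymbol\lambda,\mu)\in\mathcal{N}_k^{1,2}(\Omega)}\big(\|\mathbf{u}+\boldsymbol\lambda\|_{W^{2,2}_{k+1}(\Omega)}+\|\pi+\mu\|_{W^{1,2}_{k+1}(\Omega)}\big)\le C\,\big(\text{data}\big)+C\,\big(\|\mathbf{u}\|_{W^{1,2}(\Omega_R)}+\|\pi\|_{L^2(\Omega_R)}\big)
\]
holds, which makes the operator $T$ together with the boundary traces Fredholm. Its kernel is $\mathcal{N}_k^{1,2}(\Omega)$, identified in Proposition~\ref{caracterisation du noyau dans non borne}; by transposition, using the Green formula~\eqref{Green formula very weak p=2}, the cokernel is isomorphic to $\mathcal{N}_{-k}^{1,2}(\Omega)$, so the closed-range theorem gives solvability exactly when the data annihilate $\mathcal{N}_{-k}^{1,2}(\Omega)$, i.e. the compatibility condition~\eqref{condition de compatibilite}, together with uniqueness up to $\mathcal{N}_k^{1,2}(\Omega)$ and the quotient estimate. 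I expect the main obstacle to be this far-field analysis: establishing the weighted \emph{a priori} estimate near infinity and matching the whole-space polynomial kernels $N_{-k-1}$ correctly against the compatibility condition, where the weighted Stokes isomorphism in $\R^3$ and the careful bookkeeping of the polynomial spaces $\mathcal{P}_k$ and $\mathcal{P}_{k-1}^\Delta$ do the decisive work.
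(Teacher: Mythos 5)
You should first be aware that the paper does not prove Theorem~\ref{solution forte} at all: it is imported verbatim from the reference \cite{DMR-2019}, with the sentence ``for instance see \cite{DMR-2019}'' standing in for a proof. So there is no in-paper argument to compare against; what can be said is that your outline reproduces the strategy that the cited reference (and the analogous Dirichlet-case papers of Amrouche--Meslameni and Alliot--Amrouche) actually follows: a variational solution in the energy case via Lax--Milgram with a Korn-type coercivity argument in $W^{1,2}_0(\Omega)$ helped by $\alpha>0$, recovery of the pressure by a weighted de Rham argument, then regularity and arbitrary weights by the cut-off decomposition matching bounded-domain $H^2$ regularity for the Navier condition against the whole-space weighted Stokes isomorphism of \cite{Alliot_M3AS_1999}, and finally identification of kernel and cokernel ($\mathcal{N}^{1,2}_k(\Omega)$ and $\mathcal{N}^{1,2}_{-k}(\Omega)$) by duality. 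That is the right skeleton and each block is available in the literature the paper cites.

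The one step you should not present as unconditional is the opening reduction to $\chi=0$, $g=0$, $\mathbf{h}=\mathbf{0}$. In the weighted exterior setting the divergence operator from $\{\mathbf{v}\in W^{2,2}_{k+1}(\Omega):\ \mathbf{v}\cdot\mathbf{n}=0 \text{ on }\Gamma\}$ onto $W^{1,2}_{k+1}(\Omega)$ is \emph{not} surjective for all $k$: already for $k\geqslant 2$ the constant function $1$ belongs to $W^{0,2}_{-k}(\Omega)$, so $\int_\Omega\chi\,d\textbf{\textit{x}}=0$ is a necessary condition for solvability of $\mathrm{div}\,\mathbf{u}_\chi=\chi$ with tangential trace, and this is not among the hypotheses of the theorem --- the hypotheses only give the coupled relation $\int_\Omega\chi\,d\textbf{\textit{x}}=\langle g,1\rangle_\Gamma$ coming from the pair $(\boldsymbol{0},1)\in\mathcal{N}^{1,2}_{-k}(\Omega)$ in \eqref{condition de compatibilite} (and higher polynomial obstructions for larger $k$). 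This is precisely why the compatibility condition couples $\chi$ with $\textbf{\textit{f}}$, $g$ and $\textbf{\textit{h}}$ rather than constraining $\chi$ alone. The repair is standard --- lift the boundary data first and solve the divergence equation only modulo a finite-dimensional defect that is then absorbed into the Fredholm alternative at the end --- but as written the clean three-way splitting $\mathbf{u}=\mathbf{w}+\mathbf{u}_\chi+\mathbf{u}_b$ does not go through for all $k\in\Z$, and the bookkeeping of how \eqref{condition de compatibilite} transforms under the reduction needs to be made explicit for the duality step to close.
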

\noindent The following lemma proves the identity between some null spaces.
 
\begin{lemm}\label{1.montrer egalite de noyaux}
Assume that \, $\Omega$\,  is of class $C^{2,1}$,\,\, $\alpha\in W^{1,\infty}(\Gamma)$ and $k\in\Z$ then we have the following identity:
\begin{equation*}
\mathcal{N}_{-k+1}^{2,2}(\Omega)=\mathcal{N}_{-k}^{1,2}(\Omega).
\end{equation*}
\end{lemm}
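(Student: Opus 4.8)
The plan is to prove the two inclusions separately, the inclusion $\mathcal{N}_{-k}^{1,2}(\Omega)\subseteq\mathcal{N}_{-k+1}^{2,2}(\Omega)$ being the one that carries all the content, since it amounts to a weighted elliptic regularity statement for the elements of the kernel. The easy inclusion $\mathcal{N}_{-k+1}^{2,2}(\Omega)\subseteq\mathcal{N}_{-k}^{1,2}(\Omega)$ I would dispose of first: if $(\textbf{\textit{u}},\pi)\in\mathcal{N}_{-k+1}^{2,2}(\Omega)$, then the imbedding~\eqref{inclusion.sobolev} applied with $m=2$ gives $\textbf{\textit{u}}\in W_{-k+1}^{2,2}(\Omega)\hookrightarrow W_{-k}^{1,2}(\Omega)$ and with $m=1$ gives $\pi\in W_{-k+1}^{1,2}(\Omega)\hookrightarrow W_{-k}^{0,2}(\Omega)$. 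Since the Stokes system $T(\textbf{\textit{u}},\pi)=(\boldsymbol{0},0)$ and the Navier conditions on $\Gamma$ are the very same constraints in both definitions, the pair lies in $\mathcal{N}_{-k}^{1,2}(\Omega)$.

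For the reverse inclusion I would argue through the explicit description of the kernel furnished by Proposition~\ref{caracterisation du noyau dans non borne} (read with $k$ replaced by $-k$). If $k\le 0$, then $-k\ge 0$ and the proposition gives $\mathcal{N}_{-k}^{1,2}(\Omega)=\{(\boldsymbol{0},0)\}$; the easy inclusion then even forces $\mathcal{N}_{-k+1}^{2,2}(\Omega)=\{(\boldsymbol{0},0)\}$, so the identity holds trivially. The remaining case is $k\ge 1$, where every element of $\mathcal{N}_{-k}^{1,2}(\Omega)$ has the form $(\textbf{\textit{v}}-\boldsymbol{\lambda},\theta-\mu)$ with $(\boldsymbol{\lambda},\mu)\in N_{k-1}$ and $(\textbf{\textit{v}},\theta)\in W_0^{1,2}(\Omega)\times L^2(\Omega)$ the solution of~\eqref{problem stokes du noyau}. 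I would treat the two summands in turn. The polynomial part is handled by Property~\ref{prop.proprietes.espaces.poids}(4): since $\boldsymbol{\lambda}\in\mathcal{P}_{k-1}$ and $\mathcal{P}_{k-1}$ is precisely the set of polynomials lying in $W_{-k+1}^{2,2}(\Omega)$, while $\mu\in\mathcal{P}_{k-2}^{\Delta}\subset\mathcal{P}_{k-2}\subset W_{-k+1}^{1,2}(\Omega)$, both $\boldsymbol{\lambda}$ and $\mu$ already belong to the strong spaces.

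For the variational part $(\textbf{\textit{v}},\theta)$ I would invoke the regularity Theorem~\ref{solution forte}. This pair solves $(\mathcal{S}_T)$ with $\textbf{\textit{f}}=\boldsymbol{0}$, $\chi=0$, and boundary data $g=\boldsymbol{\lambda}\cdot\textbf{\textit{n}}\in H^{3/2}(\Gamma)$, $\textbf{\textit{h}}=2[\textbf{D}(\boldsymbol{\lambda})\textbf{\textit{n}}]_\tau+\alpha\boldsymbol{\lambda}_\tau\in H^{1/2}(\Gamma)$, the latter being tangential so that $\textbf{\textit{h}}\cdot\textbf{\textit{n}}=0$; these data are admissible because $\boldsymbol{\lambda}$ is a polynomial and $\alpha\in W^{1,\infty}(\Gamma)$. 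Applying Theorem~\ref{solution forte} with index $k=0$, for which the compatibility condition~\eqref{condition de compatibilite} is vacuous since $\mathcal{N}_0^{1,2}(\Omega)=\{(\boldsymbol{0},0)\}$, produces a strong solution in $W_1^{2,2}(\Omega)\times W_1^{1,2}(\Omega)$; by the uniqueness in $W_0^{1,2}(\Omega)\times L^2(\Omega)$ asserted in Proposition~\ref{caracterisation du noyau dans non borne}, this strong solution must coincide with $(\textbf{\textit{v}},\theta)$. Because $k\ge 1$ we have $-k+1\le 1$, and since a larger weight index imposes a stronger decay requirement on the exterior domain, $W_1^{2,2}(\Omega)\subseteq W_{-k+1}^{2,2}(\Omega)$ and $W_1^{1,2}(\Omega)\subseteq W_{-k+1}^{1,2}(\Omega)$. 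Collecting the two summands, $(\textbf{\textit{v}}-\boldsymbol{\lambda},\theta-\mu)\in W_{-k+1}^{2,2}(\Omega)\times W_{-k+1}^{1,2}(\Omega)$; as it already satisfies the homogeneous Stokes system and the homogeneous Navier conditions, it lies in $\mathcal{N}_{-k+1}^{2,2}(\Omega)$, which yields $\mathcal{N}_{-k}^{1,2}(\Omega)\subseteq\mathcal{N}_{-k+1}^{2,2}(\Omega)$ and hence the claimed identity.

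I expect the main obstacle to be the careful bookkeeping of the weighted indices rather than any new estimate: one must verify that the regularity gain supplied by Theorem~\ref{solution forte} lands in a space imbedding into the target $W_{-k+1}^{2,2}(\Omega)\times W_{-k+1}^{1,2}(\Omega)$, and that the polynomial summands sit in the strong spaces via Property~\ref{prop.proprietes.espaces.poids}(4). The analytic heart, the upgrade from $W^{1,2}$ to $W^{2,2}$ regularity, is entirely outsourced to Theorem~\ref{solution forte} and Proposition~\ref{caracterisation du noyau dans non borne}, so the difficulty is the conceptual alignment of indices together with the correct handling of the non-decaying kernel elements at infinity.
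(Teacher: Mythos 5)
Your proof is correct, but it takes a genuinely different route from the paper. The paper attacks the hard inclusion $\mathcal{N}_{-k}^{1,2}(\Omega)\subset\mathcal{N}_{-k+1}^{2,2}(\Omega)$ head-on by a localization argument: a partition of unity $\varphi+\psi=1$, extension of the far-field part $\psi\widetilde{\textbf{\textit{u}}}$ to $\R^3$ where the whole-space regularity theorem of Alliot--Amrouche is applied (after checking an orthogonality condition against $N_{-k-1}$), and local $H^2$ regularity for the Stokes--Navier problem in the bounded domain $\Omega_{R+1}$ for the near-field part $\varphi\textbf{\textit{u}}$. You instead exploit the explicit description of the kernel in Proposition~\ref{caracterisation du noyau dans non borne}, split a generic element into its polynomial part (handled by Property~\ref{prop.proprietes.espaces.poids}(4), with the index bookkeeping done correctly) and its variational part $(\textbf{\textit{v}},\theta)$, and upgrade the latter to $W^{2,2}_{1}(\Omega)\times W^{1,2}_{1}(\Omega)$ by invoking Theorem~\ref{solution forte} at the reference index $k=0$ together with the uniqueness in $W^{1,2}_0(\Omega)\times L^2(\Omega)$; this is legitimate and involves no circularity, since both Proposition~\ref{caracterisation du noyau dans non borne} and Theorem~\ref{solution forte} are imported from \cite{DMR-2019} rather than proved here. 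Your route is shorter and avoids redoing any elliptic regularity, at the price of leaning entirely on the explicit kernel characterization (the paper's argument would survive in settings where no such description is available). Two small points you should make explicit: the imbedding $W^{m,2}_{1}(\Omega)\subset W^{m,2}_{-k+1}(\Omega)$ for $k\geqslant 0$ is not among the properties listed in the paper (which only records \eqref{inclusion.sobolev}, lowering both indices simultaneously) and needs the one-line justification that $\rho\geqslant 1$ makes all the weight exponents decrease; and the admissibility of the boundary data $\textbf{\textit{h}}=2[\textbf{D}(\boldsymbol{\lambda})\textbf{\textit{n}}]_\tau+\alpha\boldsymbol{\lambda}_\tau$ in $H^{1/2}(\Gamma)$ uses that $\Gamma$ is $\mathcal{C}^{2,1}$ and that $W^{1,\infty}(\Gamma)$ multiplies $H^{1/2}(\Gamma)$ into itself, which is worth a word.
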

\begin{proof}
For the proof of this lemma, we shall apply a technique used in~\cite{Meslameni-2013} for the Stokes equation with the Dirichlet boundary conditions. The proof falls into two parts: 
\begin{itemize}
\item[$\bullet$] First inclusion: Using the imbedding~\eqref{inclusion.sobolev}, then we have $ \mathcal{N}_{-k+1}^{2,2}(\Omega)\subset\mathcal{N}_{-k}^{1,2}(\Omega).$\\
\item[$\bullet$] Second inclusion: Let $(\textbf{\textit{u}},\pi) \in \textbf{\textit{W}}_{-k}^{1,2}(\Omega) \times W_{-k}^{0,2}(\Omega)$ such that
\begin{eqnarray*}
\begin{cases}
&-\Delta\,\textbf{\textit{u}}+\nabla\,{\pi}=\textbf{\textit{0}}\quad\text{and}\quad
\mathrm{div}\,\textbf{\textit{u}}={0}\quad\text{in}\quad\Omega,\\
&\textbf{\textit{u}}\cdot \textbf{\textit{n}}=0\quad\text{and}\quad
2[\mathrm{\textbf{D}}(\textbf{\textit{u}})\textbf{\textit{n}}]_{\tau}+\alpha\textbf{\textit{u}}_{\tau}=\textbf{\textit{0}}\quad\text{ on }\quad\Gamma.
\end{cases}
\end{eqnarray*}
Note that if $\textbf{\textit{u}} \in \textbf{\textit{W}}_{-k}^{1,2}(\Omega)$ and $-\Delta\,\textbf{\textit{u}}+\nabla\,\pi=\boldsymbol{0}$ in $\Omega$ with $\pi \in W_{-k}^{0,2}(\Omega)$, we obtain $\textbf{\textit{u}}\in {H}_{-k}^2(\Delta;\Omega)$ then thanks \cite[Lemma 2.5]{DMR-2019} we have $\textbf{\textit{u}}\mapsto 2[\textbf{D}(\textbf{\textit{u}})\textbf{\textit{n}}]_{\tau}+\alpha\textbf{\textit{u}}_{\tau}$ belongs to $H^{-1/2}(\Gamma)$, and if $\mathrm{div}\,\textbf{\textit{u}}=0$ in $\Omega$, then $\textbf{\textit{u}}\cdot \textbf{\textit{n}} \in H^{\,1/2}(\Gamma)$. That means that the boundary conditions makes sense.\\

\noindent Now, let us introduce the following partition of unity:
\begin{equation}
 \label{partition de l'unite}
 \begin{split}
&{\varphi},\,\psi\in\mathcal{C}^\infty(\R^3),\quad 0\le\varphi,\,\psi\le1,\quad \varphi+\psi=1\quad\text{in}\quad\R^3,\\ 
&\varphi=1\quad\text{in}\quad B_R,\quad\text{supp }\varphi\subset B_{R+1}.
\end{split}
\end{equation}
Let $\Omega_{R+1}$ denote the intersection $\Omega\cap B_{R_{}+1}$. Then, we can write
\begin{equation*}
 \textbf{\textit{u}}=\varphi\,\textbf{\textit{u}}+\psi\,\textbf{\textit{u}},\quad\quad \pi=\varphi\,\pi+\psi\,\pi.
\end{equation*}
 The pair $(\textbf{\textit{u}},\pi)$ has an extension $(\widetilde{\textbf{\textit{u}}},\widetilde{\pi})$ that belongs to $W^{1,2}_{-k}(\R^3)\times{{W}}^{0,2}_{-k}(\R^3)$. It suffices to prove that $(\varphi \,\textbf{\textit{u}},\varphi\,\pi)$ belongs to $H^{2}(\Omega_{R_{}+1})\times H^{1}(\Omega_{R_{}+1})$ and that $(\psi\,\tilde{\textbf{\textit{u}}},\psi\,\tilde{\pi})$ belongs to $\textbf{\textit{W}}_{-k+1}^{2,2}(\R^{3})\times W^{1,2}_{-k+1}(\R^{3}) $.\\

To that end, consider first
\begin{equation}
\label{stokes dans R^3 pour psiu}
 -\Delta\,({\psi}\widetilde{\textbf{\textit{u}}})+\nabla\,({\psi}\widetilde{\pi})=\textbf{\textit{f}}_{1}\quad \mathrm{and}\quad \mathrm{div}\,({\psi}\widetilde{\textbf{\textit{u}}})=\chi_{1} 
 \quad\text{in}\quad\R^3,
 \end{equation}
 where
$$\textbf{\textit{f}}_{1}=-(2\nabla\widetilde{\textbf{\textit{u}}}\nabla{\psi}+\textbf{\textit{u}}\Delta\,{\psi})+\widetilde{\pi}\nabla\,{\psi}\quad\text{ and } 
\quad \chi_{1}=\widetilde{\textbf{\textit{u}}}\cdot\nabla\,\psi.
$$
We easily see that $\textbf{\textit{f}}_{1}$ and $\chi_1$ have bounded supports and belong to $L_{loc}^{2}(\R^3)\times W_{loc}^{1,2}(\R^3)$.
As a consequence, $(\textbf{\textit{f}}_{1},\chi_1)$ belongs to $W^{0,2}_{-k+1}(\R^3)\times W^{1,2}_{-k+1}(\R^3)$, we need to show that $\textit{\textbf{f}}_1$ and $\chi_1$ satisfies the following compatibility condition:
\begin{equation}
\label{CCR^3}
\forall (\boldsymbol{\lambda},\mu)\in N_{-k-1},\quad \langle\textbf{\textit{f}}_1,\boldsymbol{\lambda}\rangle_{W^{0,2}_{-k+1}(\R^3)\times W^{0,2}_{k-1}(\R^3)} 
-\langle \chi_1,\mu\rangle_{W^{1,2}_{-k+1}(\R^3)\times W^{-1,2}_{k-1}(\R^3)}=0,
\end{equation}
For this, using the same calculation in the proof of \cite[Theorem~$3.7$]{DMR-2019}. We obtain~\eqref{CCR^3}.\\

\noindent Therefore, it follows from \cite[Theorem 3.9]{Alliot_M3AS_1999}, that there exists a unique solution 
$(\widetilde{\textbf{\textit{v}}},\widetilde{q})\in(\textbf{\textit{W}}^{2,2}_{-k+1}(\R^3)\times {W}^{1,2}_{-k+1}(\R^3))$ satisfying the following Stokes problem:
\begin{eqnarray*}
-\Delta\,\widetilde{\textbf{\textit{v}}}+\nabla\,\widetilde{q}=\textit{\textbf{f}}_1\quad \text{and} 
\quad \mathrm{div}\,\widetilde{\textbf{\textit{v}}}=\chi_1\quad\text{in}\quad\R^3.
\end{eqnarray*}
It follows that $(\widetilde{\textbf{\textit{v}}}-\psi\widetilde{\textbf{\textit{u}}},\widetilde{q}-\psi\widetilde{\pi})$ belongs to $N_{k-1}$. Since $N_{k-1}\subset W^{2,2}_{-k+1}(\R^3)\times W^{1,2}_{-k+1}(\R^3)$, then there exist $(P,Q)$ belongs to $
 W^{2,2}_{-k+1}(\R^3)\times W^{1,2}_{-k+1}(\R^3)$ such that $(\widetilde{\textbf{\textit{v}}}-\psi\widetilde{\textbf{\textit{u}}},\widetilde{q}-\psi\widetilde{\pi})=(P,Q)$. Consequently, $(\psi\widetilde{\textbf{\textit{u}}},\psi\widetilde{\pi})$ belongs to $W^{2,2}_{-k+1}(\R^3)\times W^{1,2}_{-k+1}(\R^3)$.\\ 

 \noindent Consider now the system
$$
 -\Delta\,({\varphi}\widetilde{\textbf{\textit{u}}})+\nabla\,({\varphi}\widetilde{\pi})
 =\textbf{\textit{f}}_{2}\quad \mathrm{and}\quad \mathrm{div}\,({\varphi}\widetilde{\textbf{\textit{u}}})=\chi_{2},
$$
\noindent where $\textbf{\textit{f}}_2$ and $\chi_2$ have similar expressions as $\textbf{\textit{f}}_1$ and $\chi_1$ 
with $\psi$ remplaced by $\varphi$. It is easy to check that $(\textbf{\textit{f}}_{2},\chi_{2})$ belongs to $L^{2}(\Omega_{R+1})\times H^{1}(\Omega_{R+1})$. Then the regularity results for the Stokes problem with Navier boundary conditions in a bounded domain of class $\mathcal{C}^{2,1}$ (see~~\cite{Ahmad_2014}), allow to prove that
the following Stokes problem:
\begin{eqnarray*}
\begin{cases}
&-\Delta\,\varphi\textbf{\textit{u}}+\nabla\,\varphi{\pi}=\textbf{\textit{f}}_{2}\quad\text{and}\quad
\mathrm{div}\,\varphi\textbf{\textit{u}}={\chi}_2\quad\text{in}\quad\Omega_{R+1},\\
&\varphi\textbf{\textit{u}}\cdot \textbf{\textit{n}}=0\quad\text{and}\quad
2[\mathrm{\textbf{D}}(\varphi\textbf{\textit{u}})\textbf{\textit{n}}]_{\tau}+\alpha(\varphi\textbf{\textit{u}})_{\tau}=\textbf{\textit{0}}\quad\text{ on }\quad\Gamma.
\\
&\varphi\textbf{\textit{u}}\cdot \textbf{\textit{n}}=0\quad\text{and}\quad
2[\mathrm{\textbf{D}}(\varphi\textbf{\textit{u}})\textbf{\textit{n}}]_{\tau}+\alpha(\varphi\textbf{\textit{u}})_{\tau}=\textbf{\textit{0}}\quad\text{ on }\quad\partial\,B_{R+1},
\end{cases}
\end{eqnarray*}
has a solution $({\varphi}\widetilde{\textbf{\textit{u}}},{\varphi}\widetilde{{\pi}})$ belongs to $ H^{2}(\Omega_{R+1})\times H^{1}(\Omega_{R+1})$ which also implies 
that $({\varphi}\widetilde{\textbf{\textit{u}}},{\varphi}\widetilde{{\pi}})$ belongs to $W^{2,2}_{-k+1}(\R^3)\times W^{1,2}_{-k+1}(\R^3)$.\\
\noindent Consequently, the pair $(\textit{\textbf{u}},\pi)$ belongs to $W^{2,2}_{-k+1}(\Omega)\times W^{1,2}_{-k+1}(\Omega)$ and thus $ \mathcal{N}_{-k}^{1,2}(\Omega)\subset\mathcal{N}_{-k+1}^{2,2}(\Omega)$.
\end{itemize} 
\end{proof}

\noindent Now, we begin by giving the definition of very weak solutions.
\begin{defi}\label{définition formulation faible p=2}
Given $\textbf{\textit{f}}$, $\chi$, $g$ and $\textbf{\textit{h}}$ with
\begin{eqnarray*}
\textbf{\textit{f}}\in (\textbf{\textit{T}}^{2}_{k+1}(\Omega))',\quad \chi\in W^{0,2}_{-k}(\Omega),\quad g\in H^{-1/2}(\Gamma),\quad \textbf{\textit{h}}\in H^{-3/2}(\Gamma)
\end{eqnarray*}
such that $\textbf{\textit{h}}\cdot\textbf{\textit{n}}=0$ on $\Gamma$, a pair $(\textbf{\textit{u}},\pi)\in W^{0,2}_{-k-1}(\Omega)\times W^{-1,2}_{-k-1}(\Omega)$ is called very weak solution of $(\mathcal{S}_{T})$ if and only if, for any $(\boldsymbol{\varphi},q)\in {S}_{k+1}^2(\Omega)\times W^{1,2}_{k+1}(\Omega)$, the relations
\begin{eqnarray}\label{formulation faible I p=2}
 &&-\displaystyle\int_{\Omega} \textbf{\textit{u}}\cdot \Delta\,\boldsymbol{\varphi} d\textbf{x}-\langle\pi,\mathrm{div}\,\boldsymbol{\varphi}\rangle_{W^{-1,2}_{-k-1}(\Omega)\times\mathring{ W}^{1,2}_{k+1}(\Omega)}
\\
\nonumber &&=\langle\textbf{\textit{f}},\boldsymbol{\varphi}\rangle_{(\textbf{\textit{T}}_{k+1}^{2}(\Omega))'\times \textbf{\textit{T}}_{k+1}^{2}(\Omega)}
+\langle\textbf{\textit{h}},\boldsymbol{\varphi}\rangle_{ H^{-3/2}(\Gamma)\times  H^{3/2}(\Gamma)}-\langle g,2[\textbf{D}(\boldsymbol{\varphi})\textbf{\textit{n}}]\cdot\textbf{\textit{n}}\rangle_{H^{-1/2}(\Gamma)\times H^{1/2}(\Gamma)}
\end{eqnarray}
and
\begin{eqnarray}\label{formulation faible II p=2}
\displaystyle\int_{\Omega} \textbf{\textit{u}}\cdot \nabla\,q d\textbf{x}=-\displaystyle\int_{\Omega} \chi q d\textbf{x}+\langle g,q\rangle_{H^{-1/2}(\Gamma)\times H^{1/2}(\Gamma)}
\end{eqnarray}
are satisfied.
\end{defi}
\noindent Note that ${S}^{}_{k+1}(\Omega)$ is included in $\textbf{\textit{T}}^{}_{k+1}(\Omega)$, which insures that all brackets are well defined
\begin{prop}\label{proposition equivalence p=2}
Under the assumptions of Definition \ref{définition formulation faible p=2}, the following two statements are equivalent:\\
$i)$\quad $(\textbf{\textit{u}},\pi)\in W^{0,2}_{-k-1}(\Omega)\times  W^{-1,2}_{-k-1}(\Omega)$ is a very weak solution of $(\mathcal{S}_{T})$.\\
$ii)$\quad $(\textbf{\textit{u}},\pi)\in W^{0,2}_{-k-1}(\Omega)\times  W^{-1,2}_{-k-1}(\Omega)$ satisfies the problem $(\mathcal{S}_{T})$ in the sense of distributions.
\end{prop}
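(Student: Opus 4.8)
The plan is to prove the two implications separately, the bulk of the work lying in $(i)\Rightarrow(ii)$, where one must recover both the interior equations and, above all, the two boundary conditions.

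For $(ii)\Rightarrow(i)$ I would argue directly from the Green formulas already at our disposal. Starting from $-\Delta\textbf{\textit{u}}+\nabla\pi=\textbf{\textit{f}}$ in $(T_{k+1}(\Omega))'$, I pair with an arbitrary $\boldsymbol{\varphi}\in S_{k+1}(\Omega)\subset T_{k+1}(\Omega)$. The term $\langle\nabla\pi,\boldsymbol{\varphi}\rangle$ is rewritten by~\eqref{formule d'integration dans T^2} as $-\langle\pi,\mathrm{div}\,\boldsymbol{\varphi}\rangle$, which vanishes since $\mathrm{div}\,\boldsymbol{\varphi}=0$, while $\langle\Delta\textbf{\textit{u}},\boldsymbol{\varphi}\rangle$ is expanded by the Green formula~\eqref{Green formula very weak p=2}. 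Inserting the boundary data $\textbf{\textit{u}}\cdot\textbf{\textit{n}}=g$ and $2[\textbf{D}(\textbf{\textit{u}})\textbf{\textit{n}}]_{\tau}+\alpha\textbf{\textit{u}}_{\tau}=\textbf{\textit{h}}$ and using $2[\textbf{D}(\boldsymbol{\varphi})\textbf{\textit{n}}]_{\tau}+\alpha\boldsymbol{\varphi}_{\tau}=0$ yields exactly~\eqref{formulation faible I p=2}. For~\eqref{formulation faible II p=2} I would apply the normal-trace Green formula~\eqref{FG1} to $\textbf{\textit{u}}\in H_{-k-1}(\mathrm{div};\Omega)$ against $q\in W^{1,2}_{k+1}(\Omega)$, replace $\mathrm{div}\,\textbf{\textit{u}}$ by $\chi$ and $\textbf{\textit{u}}\cdot\textbf{\textit{n}}$ by $g$, and rearrange.

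For $(i)\Rightarrow(ii)$ I would proceed in three steps. First, testing~\eqref{formulation faible II p=2} with $q\in\mathcal{D}(\Omega)$ (whose trace on $\Gamma$ vanishes) gives $\langle\mathrm{div}\,\textbf{\textit{u}}-\chi,q\rangle=0$, hence $\mathrm{div}\,\textbf{\textit{u}}=\chi$ in $\mathcal{D}'(\Omega)$; since $\chi\in W^{0,2}_{-k}(\Omega)$ this places $\textbf{\textit{u}}$ in $H_{-k-1}(\mathrm{div};\Omega)$. Second, testing~\eqref{formulation faible I p=2} with divergence-free $\boldsymbol{\varphi}\in\mathcal{D}(\Omega)$ (which belongs to $S_{k+1}(\Omega)$, all boundary terms then being zero) gives $\langle-\Delta\textbf{\textit{u}}-\textbf{\textit{f}},\boldsymbol{\varphi}\rangle=0$ for every solenoidal test field; the de Rham theorem in the weighted framework then furnishes the pressure $\pi\in W^{-1,2}_{-k-1}(\Omega)$ of the pair, with $-\Delta\textbf{\textit{u}}+\nabla\pi=\textbf{\textit{f}}$ in $\mathcal{D}'(\Omega)$. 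Because $\textbf{\textit{f}}\in(T_{k+1}(\Omega))'$ and $\nabla\pi\in(T_{k+1}(\Omega))'$ by Lemma~\ref{caractérisation T^2}, we get $\Delta\textbf{\textit{u}}\in(T_{k+1}(\Omega))'$, i.e. $\textbf{\textit{u}}\in H_{-k-1}(\Delta;\Omega)$, so that the traces supplied by Lemma~\ref{trace Navier p=2} are now meaningful.

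The third step, the recovery of the two boundary conditions, is where I expect the main difficulty. With the interior equations in hand, both Green formulas~\eqref{FG1} and~\eqref{Green formula very weak p=2} are legitimate and can be substituted back into~\eqref{formulation faible II p=2} and~\eqref{formulation faible I p=2}. From the $q$-equation one obtains $\langle\textbf{\textit{u}}\cdot\textbf{\textit{n}}-g,q\rangle_{\Gamma}=0$ for all $q\in W^{1,2}_{k+1}(\Omega)$; since the trace map onto $H^{1/2}(\Gamma)$ is surjective, this forces $\textbf{\textit{u}}\cdot\textbf{\textit{n}}=g$. Using this to kill the term $\langle 2[\textbf{D}(\boldsymbol{\varphi})\textbf{\textit{n}}]\cdot\textbf{\textit{n}},\textbf{\textit{u}}\cdot\textbf{\textit{n}}-g\rangle_{\Gamma}$ in the $\boldsymbol{\varphi}$-equation leaves $\langle 2[\textbf{D}(\textbf{\textit{u}})\textbf{\textit{n}}]_{\tau}+\alpha\textbf{\textit{u}}_{\tau}-\textbf{\textit{h}},\boldsymbol{\varphi}\rangle_{\Gamma}=0$ for all $\boldsymbol{\varphi}\in S_{k+1}(\Omega)$. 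The crux is that these conditions must be decoupled in this order and that $\boldsymbol{\varphi}|_{\Gamma}$ sweeps out enough of $H^{3/2}(\Gamma)$: here I would invoke precisely the lifting constructed inside the proof of Lemma~\ref{trace Navier p=2}, which shows that every $\boldsymbol{\mu}\in H^{3/2}(\Gamma)$ is the tangential trace of some $\boldsymbol{\varphi}\in S_{k+1}(\Omega)$. Combined with $\textbf{\textit{h}}\cdot\textbf{\textit{n}}=0$, this yields $2[\textbf{D}(\textbf{\textit{u}})\textbf{\textit{n}}]_{\tau}+\alpha\textbf{\textit{u}}_{\tau}=\textbf{\textit{h}}$ on $\Gamma$ and completes the equivalence.
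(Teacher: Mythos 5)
Your proposal follows the same route as the paper's own proof in essentially every respect: for $(i)\Rightarrow(ii)$ you first recover $\mathrm{div}\,\textbf{\textit{u}}=\chi$, deduce $\textbf{\textit{u}}\cdot\textbf{\textit{n}}=g$ from \eqref{formulation faible II p=2} via \eqref{FG1}, place $\textbf{\textit{u}}$ in ${H}_{-k-1}(\Delta;\Omega)$ through Lemma~\ref{caractérisation T^2} so that Lemma~\ref{trace Navier p=2} applies, and conclude by comparing \eqref{Green formula very weak p=2} with \eqref{formulation faible I p=2} and invoking the tangential lifting of $H^{3/2}(\Gamma)$ constructed inside the proof of Lemma~\ref{trace Navier p=2}; the converse direction is the same density/Green-formula computation as in the paper.

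The one place where you genuinely depart from the paper is the recovery of the momentum equation. You test with solenoidal $\boldsymbol{\varphi}\in\mathcal{D}(\Omega)$ and invoke de Rham's theorem, whereas the paper reads off $-\Delta\textbf{\textit{u}}+\nabla\pi=\textbf{\textit{f}}$ directly for the given $\pi$. As written, your step does not close: de Rham produces \emph{some} distribution $p$ with $-\Delta\textbf{\textit{u}}+\nabla p=\textbf{\textit{f}}$, and nothing identifies $\nabla p$ with $\nabla\pi$ for the $\pi$ of the pair, which is what statement $(ii)$ asserts. In fact, since every test function in ${S}_{k+1}(\Omega)$ is divergence free, the bracket $\langle\pi,\mathrm{div}\,\boldsymbol{\varphi}\rangle$ in \eqref{formulation faible I p=2} vanishes identically, so Definition~\ref{définition formulation faible p=2} places no constraint on $\pi$ at all: replacing $\pi$ by $\pi+\zeta$ for an arbitrary $\zeta\in W^{-1,2}_{-k-1}(\Omega)$ leaves $(i)$ intact but can destroy $(ii)$. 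Your de Rham detour thus exposes a genuine looseness in the formulation (which the paper's proof passes over by asserting the distributional equation immediately), but it does not repair it; to make the implication airtight one either needs non-solenoidal test functions in the first relation, or one must read $(ii)$ as a statement about $\textbf{\textit{u}}$ together with the existence of an associated pressure rather than about the prescribed $\pi$.
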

\begin{proof}\quad\\
\begin{itemize}
\item[i)$\Rightarrow$ ii)] Let $(\textbf{\textit{u}},\pi)\in W^{0,2}_{-k-1}(\Omega)\times  W^{-1,2}_{-k-1}(\Omega)$ a very weak solution of $(\mathcal{S}_{T})$. Therefore, it follows immediately from~\eqref{formulation faible I p=2} and~\eqref{formulation faible II p=2}, that for any $\boldsymbol{\varphi}\in \mathcal{D}(\Omega)$, we have 
\begin{eqnarray*}
-\Delta\,\textbf{\textit{u}}+\nabla\,\pi=\textbf{\textit{f}}\quad\text{ and }\quad\mathrm{div}\,\textbf{\textit{u}}=\chi\quad\text{ in }\,\,\, \Omega.
\end{eqnarray*}
It remains now to prove that the Navier boundary conditions are satisfied. Let us first prove that $\textbf{\textit{u}}\cdot\textbf{\textit{n}}=g$ on $\Gamma$. For this, we consider the equation $\mathrm{div}\,\textbf{\textit{u}}=\chi$ in $\Omega$. We multiply this equation by $q\in W^{1,2}_{k+1}(\Omega)$ and compare with \eqref{formulation faible II p=2}. We get
\begin{eqnarray*}
\langle\textbf{\textit{u}}\cdot\textbf{\textit{n}},q\rangle_{H^{-1/2}(\Gamma)\times H^{1/2}(\Gamma)}=\langle g,q\rangle_{H^{-1/2}(\Gamma)\times H^{1/2}(\Gamma)}.
\end{eqnarray*}
Which yields $\textbf{\textit{u}}\cdot\textbf{\textit{n}}=g \in H^{-1/2}(\Gamma)$.\\
Next, writing
\begin{eqnarray*}
-\Delta\,\textbf{\textit{u}}=\textbf{\textit{f}}-\nabla\,\pi.
\end{eqnarray*}
Using Lemma~\ref{caractérisation T^2}, we deduce that $\textbf{\textit{u}}$ belongs to ${H}_{-k-1}(\Delta\, ; \Omega)$. It follows from Lemma~\ref{trace Navier p=2} that $2[\textbf{D}(\textbf{\textit{u}})\textbf{\textit{n}}]_{\tau}+\alpha\textbf{\textit{u}}_{\tau}\in H^{-3/2}(\Gamma)$. Applying~\eqref{Green formula very weak p=2} and~\eqref{formule d'integration dans T^2} enables us to write: for any $\boldsymbol{\varphi}\in {S}_{k+1}^2(\Omega)$,
\begin{eqnarray}\label{integration dans S^2}
&& -\displaystyle\int_{\Omega} \textbf{\textit{u}}\cdot \Delta\,\boldsymbol{\varphi} d\textbf{x}-\langle 2[\textbf{D}(\textbf{\textit{u}})\textbf{\textit{n}}]_{\tau}+\alpha\textbf{\textit{u}}_{\tau},\boldsymbol{\varphi}\rangle_{H^{-3/2}(\Gamma)\times H^{3/2}(\Gamma)}\\
\nonumber&&+\langle 2[\textbf{D}({\varphi})\textbf{\textit{n}}]\cdot\textbf{\textit{n}},\textbf{\textit{u}}\cdot\textbf{\textit{n}}\rangle_{H^{1/2}(\Gamma)\times H^{-1/2}(\Gamma}=\langle\textbf{\textit{f}},\boldsymbol{\varphi}\rangle_{(\textbf{\textit{T}}^{}_{k+1}(\Omega))'\times \textbf{\textit{T}}^{}_{k+1}(\Omega)}+\langle\pi,\mathrm{div}\,\boldsymbol{\varphi}\rangle_{W^{-1,2}_{-k-1}(\Omega)\times \mathring{W}^{1,2}_{k+1}(\Omega)}.
\end{eqnarray}

\noindent Comparing~\eqref{formulation faible I p=2} with~\eqref{integration dans S^2}, we get, for any $\boldsymbol{\varphi}\in {S}_{k+1}(\Omega)$
{\begin{eqnarray*}
\langle 2[\textbf{D}(\textbf{\textit{u}})\textbf{\textit{n}}]_{\tau}+\alpha\textbf{\textit{u}}_{\tau},\boldsymbol{\varphi}\rangle_{H^{-3/2}(\Gamma)\times H^{3/2}(\Gamma)}=\langle\textbf{\textit{h}},\boldsymbol{\varphi}\rangle_{H^{-3/2}(\Gamma)\times H^{3/2}(\Gamma)}.
\end{eqnarray*}}

 Let $\boldsymbol{\mu} \in H^{3/2}(\Gamma)$, there exists $\boldsymbol{\varphi}\in {S}_{k+1}(\Omega)$ such that $\boldsymbol{\varphi}=\boldsymbol{\mu}_{\tau}$ on $\Gamma$ (see proof of Lemma~\ref{trace Navier p=2}).
Consequently,
\begin{eqnarray*}
\langle 2[\textbf{D}(\textbf{\textit{u}})\textbf{\textit{n}}]_{\tau}+\alpha\textbf{\textit{u}}_{\tau},\boldsymbol{\mu} \rangle_{H^{-3/2}(\Gamma)\times H^{3/2}(\Gamma)}=\langle\textbf{\textit{h}},\boldsymbol{\mu}\rangle_{H^{-3/2}(\Gamma)\times H^{3/2}(\Gamma)}
\end{eqnarray*}
and we deduce that $2[\textbf{D}(\textbf{\textit{u}})\textbf{\textit{n}}]_{\tau}+\alpha\textbf{\textit{u}}_{\tau}=\textbf{\textit{h}}$ on $\Gamma$.\\

\item[ii)$\Rightarrow$i)] Suppose that $(\textbf{\textit{u}},\pi)\in W^{0,2}_{-k-1}(\Omega)\times  W^{-1,2}_{-k-1}(\Omega)$ satisfies the problem $(\mathcal{S}_{T})$ in the sense of distributions. Then, we have
\begin{eqnarray*}
-\Delta\textbf{\textit{u}}+\nabla\,\pi=\textbf{\textit{f}}\quad\text{ and }\quad\mathrm{div}\,\textbf{\textit{u}}=\chi\quad\text{in }\,\,\mathcal{D}'(\Omega),
\end{eqnarray*}
which implies that, for any $\boldsymbol{\varphi}\in\mathcal{D}(\Omega)$, we have
\begin{eqnarray}\label{f au sens de distrubtion p=2}
\langle-\Delta\textbf{\textit{u}}+\nabla\,\pi,\boldsymbol{\varphi}\rangle_{\mathcal{D}'(\Omega)\times \mathcal{D}(\Omega)}=\langle\textbf{\textit{f}},\boldsymbol{\varphi}\rangle_{\mathcal{D}'(\Omega)\times \mathcal{D}(\Omega)}.
\end{eqnarray}
Since $\mathcal{D}(\Omega)$ is dense in $T^{}_{k+1}(\Omega)$, then~\eqref{f au sens de distrubtion p=2} is still valid for any $\boldsymbol{\varphi}\in T^{}_{k+1}(\Omega)$. In particular, \eqref{f au sens de distrubtion p=2} is still valid for any $\boldsymbol{\varphi}\in{S}_{k+1}(\Omega)\subset T^{}_{k+1}(\Omega)$. Since $\nabla\,\pi\in (T^{}_{k+1}(\Omega))'$ and $-\Delta\,\textbf{\textit{u}}=\textbf{\textit{f}}-\nabla\,\pi\in (T^{}_{k+1}(\Omega))'$. Then according to \eqref{formule d'integration dans T^2} and \eqref{Green formula very weak p=2}, it is clear that~\eqref{formulation faible I p=2} holds.\\ 
Now from the equation $\mathrm{div}\,\textbf{\textit{u}}=\chi$ in $\Omega$, we can deduce that for any $q\in \mathcal{D}(\Omega)$
\begin{eqnarray*}
\displaystyle\int_{\Omega}\mathrm{div}\,\textbf{\textit{u}}q d\textbf{x}=\displaystyle\int_{\Omega}\chi q d\textbf{x}.
\end{eqnarray*}

Using the Green's formula~\eqref{FG1}, we can deduce~\eqref{formulation faible II p=2}. 
\end{itemize}
\end{proof}

\noindent Now, we are in position to prove the existence and uniqueness of a very weak solution for Stokes problem $(\mathcal{S}_{T})$. 
\begin{theo}\label{solutions très faibles p=2}
Suppose that $g=0$.
Given any $\textbf{\textit{f}}$, $\chi$ and $\textbf{\textit{h}}$ with
\begin{eqnarray*}
\textbf{\textit{f}}\in (\textbf{\textit{T}}^{2}_{k+1}(\Omega))',\quad \chi\in W^{0,2}_{-k}(\Omega),\quad \textbf{\textit{h}}\in H^{-3/2}(\Gamma),
\end{eqnarray*}
such that $\textbf{\textit{h}}\cdot\textbf{\textit{n}}=0$ on $\Gamma$ and for any $(\boldsymbol{\xi},\eta)\in \mathcal{N}_{k+1}^{2,2}(\Omega)$,\\
\begin{equation}\label{condition de compatibilite solution faible p=2}
\langle \textbf{\textit{f }},\boldsymbol{\xi}\rangle_{(\textbf{\textit{T}}^{2}_{k+1}(\Omega))'\times \textbf{\textit{T}}^{2}_{k+1}(\Omega)}-\displaystyle\int_{\Omega}
\chi\,{\eta}d\textbf{\textit{x}}+\langle\textbf{\textit{h}},
\boldsymbol{\xi}\rangle_{H^{-3/2}(\Gamma)\times H^{3/2}(\Gamma)}=0,
\end{equation}
then, problem $(\mathcal{S}_{T})$ has a solution $(\textbf{\textit{u}} ,\pi)\in W^{0,2}_{-k-1}(\Omega)\times W^{-1,2}_{-k-1}(\Omega)$ unique up to an element of $\mathcal{N}_{-k+1}^{2,2}(\Omega)$.
\end{theo}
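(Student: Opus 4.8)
The plan is to prove Theorem \ref{solutions très faibles p=2} by a transposition (duality) argument, using the strong solutions of Theorem \ref{solution forte} as test data. Since $g=0$, I first record the formal identity that motivates the whole construction. Suppose $(\u,\pi)$ is a smooth solution of $(\mathcal{S}_T)$ and that, for a datum $\boldsymbol{F}\in W^{0,2}_{k+1}(\Omega)$, the pair $(\boldsymbol{\varphi},q)\in W^{2,2}_{k+1}(\Omega)\times W^{1,2}_{k+1}(\Omega)$ solves the \emph{adjoint} strong Stokes problem
\begin{equation*}
-\Delta\,\boldsymbol{\varphi}+\nabla q=\boldsymbol{F},\quad \mathrm{div}\,\boldsymbol{\varphi}=0\ \text{ in }\ \Omega,\qquad \boldsymbol{\varphi}\cdot\nn=0,\quad 2[\textbf{D}(\boldsymbol{\varphi})\nn]_{\tau}+\alpha\boldsymbol{\varphi}_{\tau}=\boldsymbol{0}\ \text{ on }\ \Gamma.
\end{equation*}
Then $\boldsymbol{\varphi}\in S_{k+1}(\Omega)$ automatically, and pairing $\u$ with $\boldsymbol{F}=-\Delta\boldsymbol{\varphi}+\nabla q$, using the Green formula \eqref{Green formula very weak p=2}, the formula \eqref{FG1}, and the boundary data of $(\mathcal{S}_T)$ (with $g=0$), yields
\begin{equation*}
\int_{\Omega}\u\cdot\boldsymbol{F}\,d\textbf{x}=\langle\f,\boldsymbol{\varphi}\rangle_{(\textbf{\textit{T}}^2_{k+1}(\Omega))'\times\textbf{\textit{T}}^2_{k+1}(\Omega)}+\langle\textbf{\textit{h}},\boldsymbol{\varphi}\rangle_{H^{-3/2}(\Gamma)\times H^{3/2}(\Gamma)}-\int_{\Omega}\chi\,q\,d\textbf{x}.
\end{equation*}
This identity is what I turn into the \emph{definition} of $\u$.

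Concretely, by Theorem \ref{solution forte} the adjoint problem is solvable precisely for those $\boldsymbol{F}$ with $\int_\Omega\boldsymbol{F}\cdot\boldsymbol{\xi}\,d\textbf{x}=0$ for every $(\boldsymbol{\xi},\eta)\in\mathcal{N}^{1,2}_{-k}(\Omega)$, the solution being unique modulo $\mathcal{N}^{1,2}_{k}(\Omega)$ and, for a conveniently chosen representative, satisfying $\|\boldsymbol{\varphi}\|_{W^{2,2}_{k+1}(\Omega)}\le C\|\boldsymbol{F}\|_{W^{0,2}_{k+1}(\Omega)}$ (the infimum estimate of Theorem \ref{solution forte}). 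On this closed subspace of $W^{0,2}_{k+1}(\Omega)$ I define the linear form $T(\boldsymbol{F})$ to be the right-hand side displayed above. The crucial point is well-definedness: replacing $(\boldsymbol{\varphi},q)$ by $(\boldsymbol{\xi},\eta)\in\mathcal{N}^{1,2}_{k}(\Omega)$ changes $T(\boldsymbol{F})$ by $\langle\f,\boldsymbol{\xi}\rangle+\langle\textbf{\textit{h}},\boldsymbol{\xi}\rangle_\Gamma-\int_\Omega\chi\eta\,d\textbf{x}$, and since Lemma \ref{1.montrer egalite de noyaux} gives $\mathcal{N}^{1,2}_{k}(\Omega)=\mathcal{N}^{2,2}_{k+1}(\Omega)$, this variation vanishes \emph{exactly} by the compatibility hypothesis \eqref{condition de compatibilite solution faible p=2}. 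Boundedness $|T(\boldsymbol{F})|\le C\|\boldsymbol{F}\|_{W^{0,2}_{k+1}(\Omega)}$ then follows from $\f\in(\textbf{\textit{T}}^2_{k+1}(\Omega))'$, $\textbf{\textit{h}}\in H^{-3/2}(\Gamma)$, $\chi\in W^{0,2}_{-k}(\Omega)$, the continuity of $\boldsymbol{\varphi}\mapsto\boldsymbol{\varphi}|_\Gamma$ into $H^{3/2}(\Gamma)$, and the strong estimate. As $W^{0,2}_{-k-1}(\Omega)$ is the dual of $W^{0,2}_{k+1}(\Omega)$, the Riesz theorem furnishes $\u\in W^{0,2}_{-k-1}(\Omega)$ with $\int_\Omega\u\cdot\boldsymbol{F}\,d\textbf{x}=T(\boldsymbol{F})$ for all admissible $\boldsymbol{F}$.

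It remains to verify the weak formulation and to recover the pressure. Taking $\boldsymbol{F}=\nabla q'$ with $q'\in W^{1,2}_{k+1}(\Omega)$ (which lies in $W^{0,2}_{k+1}(\Omega)$ and is compatible, since $\int_\Omega\nabla q'\cdot\boldsymbol{\xi}\,d\textbf{x}=0$ for $(\boldsymbol{\xi},\eta)\in\mathcal{N}^{1,2}_{-k}(\Omega)$), the adjoint solution may be chosen as $(\boldsymbol{0},q')$, which gives exactly \eqref{formulation faible II p=2} with $g=0$; taking $\boldsymbol{F}=-\Delta\boldsymbol{\varphi}'$ with $\boldsymbol{\varphi}'\in S_{k+1}(\Omega)$ (adjoint solution $(\boldsymbol{\varphi}',0)$) gives \eqref{formulation faible I p=2}. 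For the pressure, restricting \eqref{formulation faible I p=2} to divergence-free $\boldsymbol{\varphi}\in\mathcal{D}(\Omega)$ shows $\langle-\Delta\u-\f,\boldsymbol{\varphi}\rangle=0$ on all such $\boldsymbol{\varphi}$, so a De~Rham-type characterization of gradients in weighted spaces produces $\pi\in W^{-1,2}_{-k-1}(\Omega)$ with $-\Delta\u+\nabla\pi=\f$; that $\u\in H_{-k-1}(\Delta;\Omega)$ (so the Green formula legitimately applies) and that $\pi$ lands in $W^{-1,2}_{-k-1}(\Omega)$ are read off from $\f\in(\textbf{\textit{T}}^2_{k+1}(\Omega))'$ and Lemma \ref{caractérisation T^2}. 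By Proposition \ref{proposition equivalence p=2} the pair $(\u,\pi)$ is then a very weak solution.

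For uniqueness up to $\mathcal{N}^{2,2}_{-k+1}(\Omega)$, I take a very weak solution with $\f=\boldsymbol{0}$, $\chi=0$, $\textbf{\textit{h}}=\boldsymbol{0}$, $g=0$; the defining identity gives $\int_\Omega\u\cdot\boldsymbol{F}\,d\textbf{x}=0$ for every compatible $\boldsymbol{F}$, i.e. $\u$ annihilates $\{\boldsymbol{F}\in W^{0,2}_{k+1}(\Omega):\boldsymbol{F}\perp\mathcal{N}^{1,2}_{-k}(\Omega)\}$. By the bipolar theorem this forces $\u$ into the finite-dimensional space spanned by the first components of $\mathcal{N}^{1,2}_{-k}(\Omega)$, which by \eqref{inclusion.sobolev} sit inside $W^{0,2}_{-k-1}(\Omega)$ and, by Lemma \ref{1.montrer egalite de noyaux}, coincide with those of $\mathcal{N}^{2,2}_{-k+1}(\Omega)$; together with the associated pressure this places $(\u,\pi)$ in $\mathcal{N}^{2,2}_{-k+1}(\Omega)$. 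I expect the two decisive hinges to be where the real work lies: first, matching the kernel governing solvability and uniqueness of the adjoint problem ($\mathcal{N}^{1,2}_{k}$) with the kernel of the compatibility condition ($\mathcal{N}^{2,2}_{k+1}$), which is exactly the role of Lemma \ref{1.montrer egalite de noyaux}; and second, the pressure recovery, namely verifying $\u\in H_{-k-1}(\Delta;\Omega)$ and running the weighted De~Rham argument so that $\pi$ genuinely belongs to $W^{-1,2}_{-k-1}(\Omega)$.
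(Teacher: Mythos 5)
Your proposal follows the same overall strategy as the paper: a transposition argument against the strong solutions of Theorem~\ref{solution forte}, with Lemma~\ref{1.montrer egalite de noyaux} doing exactly the job you assign to it (matching the uniqueness class $\mathcal{N}^{1,2}_{k}=\mathcal{N}^{2,2}_{k+1}$ of the adjoint problem with the kernel of the compatibility condition, and the solvability class $\mathcal{N}^{1,2}_{-k}=\mathcal{N}^{2,2}_{-k+1}$ with the uniqueness class of the very weak solution). The one substantive divergence is the treatment of the pressure. The paper dualizes against the \emph{pair} $(\textbf{\textit{F}},\theta)\in W^{0,2}_{k+1}(\Omega)\times\mathring{W}^{1,2}_{k+1}(\Omega)\perp\mathcal{N}^{2,2}_{-k+1}(\Omega)$, i.e.\ it also varies the divergence datum of the adjoint problem; the Riesz representation in the product space then delivers $(\textbf{\textit{u}},\pi)\in W^{0,2}_{-k-1}(\Omega)\times W^{-1,2}_{-k-1}(\Omega)$ in one stroke, with no De~Rham argument. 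You instead freeze $\mathrm{div}\,\boldsymbol{\varphi}=0$, obtain only $\textbf{\textit{u}}$ from Riesz, and recover $\pi$ afterwards. That step is where your writeup is thinnest: from $\langle -\Delta\textbf{\textit{u}}-\textbf{\textit{f}},\boldsymbol{\varphi}\rangle=0$ on divergence-free test functions, De~Rham gives $\nabla\pi=\Delta\textbf{\textit{u}}+\textbf{\textit{f}}$ only as a distribution, and placing $\pi$ in $W^{-1,2}_{-k-1}(\Omega)$ from the sole information $\nabla\pi\in(T_{k+1}(\Omega))'$ is \emph{not} what Lemma~\ref{caractérisation T^2} says (that lemma characterizes $(T_{k+1}(\Omega))'$, it does not characterize which of its elements are gradients, nor does it assert that a distribution whose gradient lies there must itself lie in $W^{-1,2}_{-k-1}(\Omega)$). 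A weighted gradient-characterization result of Amrouche--Girault--Giroire type would close this, but it is an extra ingredient you neither state nor prove, and the paper's choice of dual pairing is designed precisely to avoid needing it. On the credit side, you make explicit two points the paper leaves implicit: the well-definedness of the linear form under the ambiguity of the adjoint solution (which is exactly where the compatibility condition~\eqref{condition de compatibilite solution faible p=2} is consumed), and the identification of the uniqueness class via the bipolar theorem.
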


\begin{proof}
The proof of the following theorem is similar to~\cite[Theorem 7]{LMR_2020}.
Observe that, in view of Proposition~\ref{proposition equivalence p=2}, if the pair $(\textit{\textbf{u}},\pi)$ that belongs to $ W_{-k-1}^{0,2}(\Omega)\times W_{-k-1}^{-1,2}(\Omega)$ is a solution of ($\mathcal{S}_T$), then for any $(\boldsymbol{\varphi},q)\in {S}_{k+1}(\Omega)\times W^{1,2}_{k+1}(\Omega)$, adding~\eqref{formulation faible I p=2} and~\eqref{formulation faible II p=2}, we have 
\begin{eqnarray}\label{form variationnel p=2}
\nonumber &&\displaystyle\int_{\Omega} \textbf{\textit{u}}\cdot\big(- \Delta\,\boldsymbol{\varphi}+\nabla\,q\big) d\textbf{x}-\langle\pi,\mathrm{div}\,\boldsymbol{\varphi}\rangle_{W^{-1,2}_{-k-1}(\Omega)\times\mathring{ W}^{1,2}_{k+1}(\Omega)}
=\langle\textbf{\textit{f}},\boldsymbol{\varphi}\rangle_{(\textbf{\textit{T}}_{k+1}^{}(\Omega))'\times \textbf{\textit{T}}_{k+1}^{}(\Omega)}\\
 &&
+\langle\textbf{\textit{h}},\boldsymbol{\varphi}\rangle_{ H^{-3/2}(\Gamma)\times  H^{3/2}(\Gamma)}-\displaystyle\int_{\Omega} \chi q d\textbf{x}+\langle g,q-2[\textbf{D}(\boldsymbol{\varphi})\textbf{\textit{n}}]\cdot\textbf{\textit{n}}\rangle_{H^{-1/2}(\Gamma)\times H^{1/2}(\Gamma)}.
\end{eqnarray}
In particular, if $\forall (\boldsymbol{\varphi},q)\in \mathcal{N}_{k+1}^{2,2}(\Omega)$ we obtain~\eqref{condition de compatibilite solution faible p=2}.\\

\noindent It remains now to look for $(\textit{\textbf{u}},\pi)$ belongs to $ W_{-k-1}^{0,2}(\Omega)\times W_{-k-1}^{-1,2}(\Omega)$ satisfying~\eqref{form variationnel p=2}.
To that end, let $\textbf{T}$ be the linear form defined from $W^{0,2}_{k+1}(\Omega)\times \mathring{ W}^{1,2}_{k+1}(\Omega)\perp \mathcal{N}_{-k+1}^{2,2}(\Omega) $ onto $\R$ by
\begin{eqnarray*}
\textbf{T}(\textbf{\textit{F}}, \theta)=\langle\textbf{\textit{f}},\boldsymbol{\varphi}\rangle_{(T_{k+1}^{}(\Omega))'\times T_{k+1}^{}(\Omega)}+\langle\textbf{\textit{h}},\boldsymbol{\varphi}\rangle_{H^{-3/2}(\Gamma)\times H^{3/2}(\Gamma)}-\displaystyle\int_{\Omega} \chi q d\textbf{x},
\end{eqnarray*}
where the pair $(\boldsymbol{\varphi},q)\in W^{2,2}_{k+1}(\Omega)\times W^{1,2}_{k+1}(\Omega)$ is a solution of the following problem (see Theorem~\ref{solution forte}):
\begin{eqnarray*}
\begin{cases}
-\Delta\,\boldsymbol{\varphi}+\nabla\,{q}=\textbf{\textit{F}}\quad and\quad
\mathrm{div}\,\boldsymbol{\varphi}=-{\theta}\qquad\quad\text{ in }\,\Omega,\\
\quad\boldsymbol{\varphi}\cdot \textbf{\textit{n}}=0\,\,\,\,and\quad
2[\mathrm{\textbf{D}}(\boldsymbol{\varphi})\textbf{\textit{n}}]_{\tau}+\alpha\boldsymbol{\varphi}_{\tau}=\textbf{\textit{0}}\quad\text{ on }\,\Gamma

\end{cases}
\end{eqnarray*}
and satisfying the following estimate:
\small{\begin{eqnarray}\label{estimation solution réguliere p=2}
\inf _{(\boldsymbol{\lambda},\mu)\in\mathcal{N}_{-k+1}^{2,2}(\Omega)}\Big(\Vert \boldsymbol{\varphi}+\boldsymbol{\lambda}\Vert_{\textbf{\textit{W}}^{2,2}_{k+1}(\Omega)}+\Vert {q}+\mu \Vert_{{{W}}^{1,2}_{k+1}(\Omega)}\Big)\leqslant C\Big(\Vert \textbf{\textit{F}}\Vert_{W^{0,2}_{k+1}(\Omega)}+\Vert\theta\Vert_{W^{1,2}_{k+1}(\Omega)}\Big).
\end{eqnarray}}
Then for any pair $(\textbf{\textit{F}},\theta)\in W^{0,2}_{k+1}(\Omega)\times W^{1,2}_{k+1}(\Omega)$ and for any $(\boldsymbol{\lambda},\mu)\in\mathcal{N}_{-k+1}^{2,2}(\Omega)$, we can write
\begin{eqnarray*}
\vert\textbf{T}(\textbf{\textit{F}}, \theta)\vert &= &\Big\vert\langle\textbf{\textit{f}},\boldsymbol{\varphi}\rangle_{(T_{k+1}^{}(\Omega))'\times T_{k+1}^{}(\Omega)}+\langle\textbf{\textit{h}},\boldsymbol{\varphi}\rangle_{H^{-3/2}(\Gamma)\times H^{3/2}(\Gamma)}-\displaystyle\int_{\Omega} \chi q d\textbf{x}\Big\vert\\
&=&\Big\vert\langle\textbf{\textit{f}},\boldsymbol{\varphi}+\boldsymbol{\lambda}\rangle_{(T_{k+1}^{}(\Omega))'\times T_{k+1}^{}(\Omega)}+\langle\textbf{\textit{h}},\boldsymbol{\varphi}+\boldsymbol{\lambda}\rangle_{H^{-3/2}(\Gamma)\times H^{3/2}(\Gamma)}-\displaystyle\int_{\Omega} \chi (q+\mu) d\textbf{x}\Big\vert\\
&\leqslant & C\Big(\Vert\textbf{\textit{f}}\Vert_{(\textbf{\textit{T}}^{}_{k+1}(\Omega))'}+\Vert\chi\Vert_{W^{0,2}_{-k}(\Omega)}+\Vert\textbf{\textit{h}}\Vert_{H^{-3/2}(\Gamma)}\Big)\Big(\Vert \boldsymbol{\varphi}+\boldsymbol{\lambda}\Vert_{\textbf{\textit{W}}^{2,2}_{k+1}(\Omega)}+\Vert {q}+\mu \Vert_{{{W}}^{1,2}_{k+1}(\Omega)}\Big).
\end{eqnarray*}
Using~\eqref{estimation solution réguliere p=2}, we have
\begin{eqnarray*}
\vert\textbf{T}(\textbf{\textit{F}}, \theta)\vert &\leqslant & C\inf _{(\boldsymbol{\lambda},\mu)\in\mathcal{N}_{-k+1}^{2,2}(\Omega)}\Big(\Vert \boldsymbol{\varphi}+\boldsymbol{\lambda}\Vert_{\textbf{\textit{W}}^{2,2}_{k+1}(\Omega)}+\Vert {q}+\mu \Vert_{{{W}}^{1,2}_{k+1}(\Omega)}\Big)\\
&\leqslant &  C\Big(\Vert \textbf{\textit{F}}\Vert_{W^{0,2}_{k+1}(\Omega)}+\Vert\theta\Vert_{W^{1,2}_{k+1}(\Omega)}\Big)
\end{eqnarray*}

\noindent From this, we can deduce that the linear form $\textbf{T}$ is continuous on the space $W^{0,2}_{k+1}(\Omega)\times\mathring{ W}^{1,2}_{k+1}(\Omega)\perp \mathcal{N}_{-k+1}^{2,2}(\Omega)$. Since the dual space of $W^{0,2}_{k+1}(\Omega)\times \mathring{ W}^{1,2}_{k+1}(\Omega)\perp \mathcal{N}_{-k+1}^{2,2}(\Omega)$ is $W^{0,2}_{-k-1}(\Omega)\times W^{-1,2}_{-k-1}(\Omega)/ \mathcal{N}_{-k+1}^{2,2}(\Omega)$. From Riesz's Representation theorem, we deduce that there exists a unique  $(\textbf{\textit{u}},\pi)$ belongs to $W^{0,2}_{-k-1}(\Omega)\times W^{-1,2}_{-k-1}(\Omega)/\mathcal{N}_{-k+1}^{2,2}(\Omega)$ satisfying ~\eqref{form variationnel p=2}.
\end{proof}
\noindent Now, we will finish with the case that $g$ is not vanish. For this reason we need the following lemma concerning the existence and uniqueness for the exterior Neumann problem.
\begin{lemm}\label{theorem Neumann generalized}
For any $f$ in $L^{2}(\Omega)$ and $g$ in $H^{-1/2}(\Gamma)$. Then, the problem :
\begin{eqnarray}\label{Neumann problem}
-\Delta\,u=f\quad\text{ in }\quad\Omega,\quad\dfrac{\partial u}{\partial\textbf{\textit{n}}}=g\quad\text{ on }
\quad\Gamma,
\end{eqnarray}
 has a unique solution $u\in W^{1,2}_{-1}(\Omega)$ and we have the following estimate: 
\begin{eqnarray}\label{estimation faible neumann}
\Vert u\Vert_{W^{1,2}_{-1}(\Omega)} \leqslant C\big(\Vert f\Vert_{L^{2}(\Omega)}+\Vert g\Vert_{H^{-1/2}(\Omega)}\big)
\end{eqnarray}
\end{lemm}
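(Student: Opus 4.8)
The plan is to solve \eqref{Neumann problem} by a variational (Lax--Milgram) argument in the weighted space $W^{1,2}_0(\Omega)$, and then to reach the stated space $W^{1,2}_{-1}(\Omega)$ through the imbedding \eqref{inclusion.sobolev} together with the weighted elliptic regularity of the Laplacian. The weak formulation I would use reads: find $u$ such that, for every test function $v$,
\[
\int_\Omega \nabla u\cdot\nabla v\,d\textbf{x} \;=\; \int_\Omega f\,v\,d\textbf{x} \;+\; \langle g,v\rangle_\Gamma,
\]
where the boundary bracket is the $H^{-1/2}(\Gamma)\times H^{1/2}(\Gamma)$ duality; this is legitimate because $v\in W^{1,2}_0(\Omega)$ has a trace $\gamma_0 v\in H^{1/2}(\Gamma)$ by the local trace theorem, and $g\in H^{-1/2}(\Gamma)$.

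Because the two data have different regularities, I would split the problem by linearity into a harmonic part $u_2$ carrying the datum $g$ and a part $u_1$ carrying the source $f$ with homogeneous Neumann datum. For $u_2$ the functional $v\mapsto\langle g,\gamma_0 v\rangle_\Gamma$ is continuous on $W^{1,2}_0(\Omega)$ by the trace estimate, while the Dirichlet form $a(u,v)=\int_\Omega\nabla u\cdot\nabla v\,d\textbf{x}$ is coercive there: by Properties~\ref{prop.proprietes.espaces.poids} the seminorm $|\cdot|_{W^{1,2}_0(\Omega)}$ is a norm on $W^{1,2}_0(\Omega)$ (the associated polynomial space being trivial for these indices, which is the Hardy inequality underlying \eqref{thhardy}). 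Lax--Milgram then gives a unique $u_2\in W^{1,2}_0(\Omega)$ with $\|u_2\|_{W^{1,2}_0(\Omega)}\le C\|g\|_{H^{-1/2}(\Gamma)}$, and the imbedding $W^{1,2}_0(\Omega)\hookrightarrow W^{1,2}_{-1}(\Omega)$ places it in the required space.

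The part $u_1$ is where I expect the main obstacle. A direct Lax--Milgram in $W^{1,2}_0(\Omega)$ is not available, because for $f$ merely in $L^2(\Omega)=W^{0,2}_0(\Omega)$ the functional $v\mapsto\int_\Omega f\,v\,d\textbf{x}$ need not be continuous on $W^{1,2}_0(\Omega)$: Hardy's inequality controls $\rho^{-1}v$ in $L^2(\Omega)$ but not $v$ itself, so $\int_\Omega f\,v$ cannot in general be dominated by $\|f\|_{L^2(\Omega)}\,\|v\|_{W^{1,2}_0(\Omega)}$. To circumvent this I would appeal to the isomorphism theory of the Laplacian in weighted Sobolev spaces (see~\cite{Amrouche_1994, Amrouche_JMPA_1997}): for an $L^2$ right-hand side the exterior Neumann problem with $g=0$ admits a solution $u_1\in W^{2,2}_0(\Omega)$ with $\|u_1\|_{W^{2,2}_0(\Omega)}\le C\|f\|_{L^2(\Omega)}$, and \eqref{inclusion.sobolev}, namely $W^{2,2}_0(\Omega)\hookrightarrow W^{1,2}_{-1}(\Omega)$, gives $u_1\in W^{1,2}_{-1}(\Omega)$ with the corresponding bound.

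Superposing, $u=u_1+u_2\in W^{1,2}_{-1}(\Omega)$ solves \eqref{Neumann problem} and the two bounds combine into \eqref{estimation faible neumann}. For uniqueness I would pass to the homogeneous problem: any $w\in W^{1,2}_0(\Omega)$ with $-\Delta w=0$ and $\partial w/\partial\textbf{\textit{n}}=0$ satisfies $\int_\Omega|\nabla w|^2\,d\textbf{x}=0$ upon testing against $w$, whence $w=0$ by the coercivity above. The one point that must be watched is the constant mode: a constant belongs to $W^{1,2}_{-1}(\Omega)$ and solves the homogeneous Neumann problem, so uniqueness becomes genuine only after this mode is removed, which is exactly what working in $W^{1,2}_0(\Omega)$ (in which no nonzero constant lives) achieves; checking that the solution produced by the construction is normalised in this way is the delicate bookkeeping step.
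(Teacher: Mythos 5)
Your argument is correct in outline but takes a genuinely different route from the paper. The paper never solves an exterior Neumann problem with an $L^{2}$ source: it extends $f$ by zero to $\R^{3}$, solves $-\Delta\widetilde{v}=\widetilde{f}$ in the whole space with $\widetilde{v}\in W^{2,2}_{0}(\R^{3})$ by \cite[Theorem 3.9]{Amrouche_1994} (no boundary condition, hence no trace difficulty), and then corrects the normal derivative by a harmonic $w\in W^{1,2}_{-1}(\Omega)$ with $\partial w/\partial\textbf{\textit{n}}=g-\nabla\widetilde{v}\cdot\textbf{\textit{n}}$, quoting \cite[Theorem 2.7]{Louati_Meslameni_Razafison}; the solution is $u=\widetilde{v}|_{\Omega}+w$. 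You instead split the data as $f\leftrightarrow u_{1}$, $g\leftrightarrow u_{2}$. Your treatment of $u_{2}$ by Lax--Milgram in $W^{1,2}_{0}(\Omega)$ is sound and self-contained (coercivity from item 4 of Properties~\ref{prop.proprietes.espaces.poids}, continuity of $v\mapsto\langle g,\gamma_{0}v\rangle_{\Gamma}$ from the local trace theorem), and it even places that part in the smaller space $W^{1,2}_{0}(\Omega)\subset W^{1,2}_{-1}(\Omega)$. The weak link is $u_{1}$: the claim that the exterior Neumann problem with source $f\in L^{2}(\Omega)=W^{0,2}_{0}(\Omega)$ and homogeneous Neumann datum is solvable in $W^{2,2}_{0}(\Omega)$ with the stated estimate is exactly the nonstandard ingredient. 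As you yourself note, $f\in L^{2}$ does not define a continuous functional on $W^{1,2}_{0}(\Omega)$, so one is forced into the shifted scale $W^{1,2}_{-1}(\Omega)$, where constants enter the kernel and where existence and $W^{2,2}_{0}$ regularity require a specific isomorphism theorem rather than a generic appeal to \cite{Amrouche_1994,Amrouche_JMPA_1997}. The zero-extension device in the paper exists precisely to sidestep this point: the whole-space solve is unconditional, and the leftover boundary correction involves only an $H^{-1/2}(\Gamma)$ Neumann datum with zero source. If you pin the $u_{1}$ step to a precise reference, or first lift $f$ by a whole-space (Newtonian potential) solve before running your Lax--Milgram — which then reproduces the paper's proof — your argument closes. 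Your remark about the constant mode is well taken: in dimension three constants belong to $W^{1,2}_{-1}(\Omega)$ and solve the homogeneous problem, so the uniqueness asserted in \eqref{Neumann problem}--\eqref{estimation faible neumann} can only hold up to an additive constant (or after a normalisation such as the one your $W^{1,2}_{0}(\Omega)$ framework provides for the harmonic part); the paper's proof is silent on this.
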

\begin{proof}
Let us extend $f$ by zero in $\Omega'$ and let $\widetilde{f}$ denote the extended function. Then $\widetilde{f}$ belongs to $L^{2}(\R^3)$. Thanks \cite[Theorem 3.9]{Amrouche_1994}, there exists a unique function $\widetilde{v} \in W^{2,2}_{0}(\R^3)$ such that
\begin{eqnarray}\label{laplace in R^3}
-\Delta\,\widetilde{v}=\widetilde{f}\quad\text{ in }\,\,\R^3.
\end{eqnarray}
Then $\nabla\,\widetilde{v}\cdot n$ belongs to $H^{1/2}(\Gamma)\hookrightarrow H^{-1/2}(\Gamma)$. It follows from \cite[Theorem 2.7]{Louati_Meslameni_Razafison}, that the following problem:
\begin{eqnarray}\label{Neumann Harmonic}
\Delta\,w=0\quad\text{ in }\,\,\,\Omega\quad\text{ and } \quad\nabla\,w\cdot n=g-\nabla\,\widetilde{v}\cdot n\quad\text{ on }\,\,\,\Gamma,
\end{eqnarray}
has a unique solution $w\in W^{1,2}_{-1}(\Omega)$. Thus $u=\widetilde{v}_{\mid_{\Omega}}
+w\in W^{1,2}_{-1}(\Omega)$ is the required solution of~\eqref{Neumann problem}. 
\end{proof}
\begin{theo}\label{g neq 0}

Given any $\textbf{\textit{f}}$, $\chi$, $g$ and $\textbf{\textit{h}}$ with
\begin{eqnarray*}
\textbf{\textit{f}}\in (\textbf{\textit{T}}^{2}_{k+1}(\Omega))',\quad \chi\in W^{0,2}_{-k}(\Omega),\quad g\in H^{-1/2}(\Gamma),\quad \textbf{\textit{h}}\in H^{-3/2}(\Gamma),
\end{eqnarray*}
such that $\textbf{\textit{h}}\cdot\textbf{\textit{n}}=0$ on $\Gamma$ and for any $(\boldsymbol{\xi},\eta)\in \mathcal{N}_{k+1}^{2,2}(\Omega)$,\\
\begin{equation}\label{condition de compatibilite solution faible p=22}
\langle \textbf{\textit{f }},\boldsymbol{\xi}\rangle_{(\textbf{\textit{T}}^{2}_{k+1}(\Omega))'\times \textbf{\textit{T}}^{2}_{k+1}(\Omega)}-\displaystyle\int_{\Omega}
\chi\,{\eta}d\textbf{\textit{x}}=\langle g,2[\textrm{\textbf{D}}(\boldsymbol{\xi})\textbf{\textit{n}}]\cdot\textbf{\textit{n}}-\eta \rangle_{H^{-1/2}(\Gamma)\times H^{1/2}(\Gamma)}-\langle\textbf{\textit{h}},
\boldsymbol{\xi}\rangle_{H^{-3/2}(\Gamma)\times H^{3/2}(\Gamma)},
\end{equation}
then, problem $(\mathcal{S}_{T})$ has a solution $(\textbf{\textit{u}} ,\pi)\in W^{0,2}_{-k-1}(\Omega)\times W^{-1,2}_{-k-1}(\Omega)$ unique up to an element of $\mathcal{N}_{-k+1}^{2,2}(\Omega)$.
\end{theo}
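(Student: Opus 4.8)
The plan is to reduce the inhomogeneous case $g\neq0$ to the already-settled homogeneous case of Theorem~\ref{solutions très faibles p=2} by lifting the normal trace $g$. First I would apply Lemma~\ref{theorem Neumann generalized} with vanishing right-hand side $f=0\in L^{2}(\Omega)$ to obtain a harmonic potential $z\in W^{1,2}_{-1}(\Omega)$ solving $-\Delta\,z=0$ in $\Omega$ and $\partial z/\partial\textbf{\textit{n}}=g$ on $\Gamma$. Setting $\textbf{\textit{u}}_{0}=\nabla z$, one checks at once that $\textbf{\textit{u}}_{0}\cdot\textbf{\textit{n}}=\partial z/\partial\textbf{\textit{n}}=g$ on $\Gamma$, that $\mathrm{div}\,\textbf{\textit{u}}_{0}=\Delta\,z=0$, and that $\Delta\,\textbf{\textit{u}}_{0}=\nabla\,\Delta\,z=\boldsymbol{0}$. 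Thus $\textbf{\textit{u}}_{0}\in W^{0,2}_{-1}(\Omega)$; after verifying that it lies in the target weighted space $W^{0,2}_{-k-1}(\Omega)$ one gets $\textbf{\textit{u}}_{0}\in H_{-k-1}(\Delta;\Omega)$ (here $\Delta\,\textbf{\textit{u}}_{0}=\boldsymbol{0}$ makes the second condition trivial), so that by Lemma~\ref{trace Navier p=2} the tangential stress $2[\textbf{D}(\textbf{\textit{u}}_{0})\textbf{\textit{n}}]_{\tau}+\alpha(\textbf{\textit{u}}_{0})_{\tau}$ is a well-defined element of $H^{-3/2}(\Gamma)$.

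Next I would look for $\textbf{\textit{w}}:=\textbf{\textit{u}}-\textbf{\textit{u}}_{0}$ as a very weak solution of $(\mathcal{S}_{T})$ associated with the \emph{homogeneous} normal datum $g=0$ and the modified data $\textbf{\textit{h}}':=\textbf{\textit{h}}-\big(2[\textbf{D}(\textbf{\textit{u}}_{0})\textbf{\textit{n}}]_{\tau}+\alpha(\textbf{\textit{u}}_{0})_{\tau}\big)$, the source $\textbf{\textit{f}}$ and the divergence $\chi$ being left unchanged, which is consistent because $\Delta\,\textbf{\textit{u}}_{0}=\boldsymbol{0}$ and $\mathrm{div}\,\textbf{\textit{u}}_{0}=0$. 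Since $2[\textbf{D}(\textbf{\textit{u}}_{0})\textbf{\textit{n}}]_{\tau}+\alpha(\textbf{\textit{u}}_{0})_{\tau}$ is tangential, one has $\textbf{\textit{h}}'\cdot\textbf{\textit{n}}=\textbf{\textit{h}}\cdot\textbf{\textit{n}}=0$ on $\Gamma$ and $\textbf{\textit{h}}'\in H^{-3/2}(\Gamma)$, so the data $(\textbf{\textit{f}},\chi,0,\textbf{\textit{h}}')$ meets the structural hypotheses of Theorem~\ref{solutions très faibles p=2}.

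The heart of the matter, and the step I expect to be the main obstacle, is to show that the compatibility condition \eqref{condition de compatibilite solution faible p=2} for the reduced data $(\textbf{\textit{f}},\chi,0,\textbf{\textit{h}}')$ is equivalent to the assumed condition \eqref{condition de compatibilite solution faible p=22} for the original data. Expanding $\langle\textbf{\textit{h}}',\boldsymbol{\xi}\rangle_{\Gamma}=\langle\textbf{\textit{h}},\boldsymbol{\xi}\rangle_{\Gamma}-\langle 2[\textbf{D}(\textbf{\textit{u}}_{0})\textbf{\textit{n}}]_{\tau}+\alpha(\textbf{\textit{u}}_{0})_{\tau},\boldsymbol{\xi}\rangle_{\Gamma}$ for $(\boldsymbol{\xi},\eta)\in\mathcal{N}_{k+1}^{2,2}(\Omega)$, the two conditions coincide exactly when
\[
\langle 2[\textbf{D}(\textbf{\textit{u}}_{0})\textbf{\textit{n}}]_{\tau}+\alpha(\textbf{\textit{u}}_{0})_{\tau},\boldsymbol{\xi}\rangle_{\Gamma}
=\langle g,\,2[\textbf{D}(\boldsymbol{\xi})\textbf{\textit{n}}]\cdot\textbf{\textit{n}}-\eta\rangle_{\Gamma}.
\]
To establish this identity I would observe that any $(\boldsymbol{\xi},\eta)\in\mathcal{N}_{k+1}^{2,2}(\Omega)$ has $\boldsymbol{\xi}\in S_{k+1}(\Omega)$, and apply the Green's formula \eqref{Green formula very weak p=2} to the pair $(\textbf{\textit{u}}_{0},\boldsymbol{\xi})$. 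Its left-hand side vanishes since $\Delta\,\textbf{\textit{u}}_{0}=\boldsymbol{0}$; the volume term becomes $\int_{\Omega}\textbf{\textit{u}}_{0}\cdot\nabla\,\eta\,d\textbf{x}$ after using $\Delta\,\boldsymbol{\xi}=\nabla\,\eta$, and this equals $\langle g,\eta\rangle_{\Gamma}$ by the Green formula \eqref{FG1} together with $\mathrm{div}\,\textbf{\textit{u}}_{0}=0$ and $\textbf{\textit{u}}_{0}\cdot\textbf{\textit{n}}=g$; finally the last boundary term is $\langle 2[\textbf{D}(\boldsymbol{\xi})\textbf{\textit{n}}]\cdot\textbf{\textit{n}},g\rangle_{\Gamma}$. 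Rearranging gives precisely the required identity, so the assumed \eqref{condition de compatibilite solution faible p=22} is equivalent to the compatibility condition needed to invoke Theorem~\ref{solutions très faibles p=2}.

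Finally, Theorem~\ref{solutions très faibles p=2} produces $(\textbf{\textit{w}},\pi)\in W^{0,2}_{-k-1}(\Omega)\times W^{-1,2}_{-k-1}(\Omega)$, unique up to an element of $\mathcal{N}_{-k+1}^{2,2}(\Omega)$, and then $\textbf{\textit{u}}:=\textbf{\textit{u}}_{0}+\textbf{\textit{w}}$ together with $\pi$ solves $(\mathcal{S}_{T})$ with the full data $(\textbf{\textit{f}},\chi,g,\textbf{\textit{h}})$. Because the lifting $\textbf{\textit{u}}_{0}$ is a fixed element, the uniqueness class is unchanged, so $(\textbf{\textit{u}},\pi)$ is unique up to $\mathcal{N}_{-k+1}^{2,2}(\Omega)$, as claimed. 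Beyond the compatibility bookkeeping above, the only delicate point to watch is the membership of the lifting $\textbf{\textit{u}}_{0}=\nabla z$ in the correct weighted space $W^{0,2}_{-k-1}(\Omega)$, which is what guarantees that all duality brackets used in the Green formulas are legitimate.
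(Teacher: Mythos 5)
Your overall strategy is the paper's: lift the normal datum by solving the exterior Neumann problem of Lemma~\ref{theorem Neumann generalized}, reduce to the homogeneous case of Theorem~\ref{solutions très faibles p=2}, and transfer the compatibility condition via the Green formula. The compatibility computation you sketch is essentially the one in the paper, which applies~\eqref{Formule de green pour cc p=2} with $\boldsymbol{\varphi}=\nabla w$ and $\psi=0$.

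However, there is a genuine gap exactly at the point you defer to the end: the membership of the lifting in $W^{0,2}_{-k-1}(\Omega)$. Lemma~\ref{theorem Neumann generalized} only gives $z\in W^{1,2}_{-1}(\Omega)$, hence $\nabla z\in W^{0,2}_{-1}(\Omega)$. For $k\ge 0$ this embeds into $W^{0,2}_{-k-1}(\Omega)$, but the theorem is stated for every $k\in\Z$, and for $k<0$ the space $W^{0,2}_{-k-1}(\Omega)$ demands decay at infinity ($\rho^{-k-1}\nabla z\in L^2(\Omega)$ with $-k-1>-1$). The gradient of an exterior harmonic function generically decays only like $r^{-2}$, so the required integrability fails for $k$ sufficiently negative; your $\textbf{\textit{u}}_0=\nabla z$ then does not land in the solution space, Lemma~\ref{trace Navier p=2} cannot be invoked to define $2[\textbf{D}(\textbf{\textit{u}}_0)\textbf{\textit{n}}]_{\tau}+\alpha(\textbf{\textit{u}}_0)_{\tau}$, and the duality brackets in your reduction lose their meaning. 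The paper's fix is to truncate: take a cutoff $\psi$ equal to $1$ near $\Gamma$ with compact support, set $w=\psi v$, and lift with $\nabla w$, which has compact support and therefore belongs to every weighted space. The price is that $\nabla w$ is no longer harmonic nor divergence free, so the reduced problem carries the extra compactly supported terms $\Delta(\nabla w)$ in the force and $-\Delta w$ in the divergence; these are harmless since they lie in $(T^{2}_{k+1}(\Omega))'$ and $W^{0,2}_{-k}(\Omega)$ respectively, and the compatibility bookkeeping then goes through exactly as you describe. Your argument becomes correct once this truncation step is inserted.
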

\begin{proof}

Let $g\in H^{-1/2}(\Gamma)$. Consider the following exterior Neumann problem
\begin{eqnarray}\label{problem Neumann p=2}
\Delta\,v=0\quad\text{ in } \Omega\quad\text{ and }\quad \nabla\,v\,\cdot\textbf{\textit{n}}=g\quad\text{ on }\Gamma.
\end{eqnarray}
\noindent Then thanks to Lemma~\ref{theorem Neumann generalized}, this problem has a solution $v$ that belongs to $W^{1,2}_{-1}(\Omega)$. Let $R$ be a positive real number large enough so that $\overline{\Omega'}\subset B_{R}$ and let $\psi \in \mathcal{D}(\overline{\Omega})$ such that $0\le\psi\le 1$,
supp~$\psi~\subset~\overline{\Omega}_{R+1}$ and $\psi=1$ in $\overline{\Omega}_R$. 
Set now $w=v\psi,$ then $w$ has obviously a compact support and thus $w$ belongs to $W^{1,2}_{-k-1}(\Omega)$. Next, $\Delta\,w=v\,\Delta\,\psi+2\nabla\,v\,\nabla\,\psi$ has a compact support and thus belongs to $W^{0,2}_{-k}(\Omega)$.\\

\noindent Next we look for a pair $(\textbf{\textit{z}},{\pi})\in W^{0,2}_{-k-1}(\Omega)\times {{W}}^{-1,2}_{-k-1}(\Omega)$ satisfying
\begin{eqnarray}\label{problem relevement g p=2}
\begin{cases}
-\Delta\,\textbf{\textit{z}}+\nabla\,{\pi}=\textbf{\textit{f}}+\Delta(\nabla\,{w})\quad\text{and}\quad
\mathrm{div}\,\textbf{\textit{z}}=\chi-\Delta\,w\quad\text{in}\quad \Omega,\\[6pt]
\textbf{\textit{z}}\cdot \textbf{\textit{n}}=0\quad\text{and}\quad
2[\mathrm{\textbf{D}}(\textbf{\textit{z}})\textbf{\textit{n}}]_{\tau}+\alpha\textbf{\textit{z}}_{\tau}=\textbf{\textit{h'}}\quad\text{on}\quad\Gamma,
\end{cases}
\end{eqnarray}
where $\textbf{\textit{h'}}=\textbf{\textit{h}}-2[\mathrm{\textbf{D}}(\nabla\,{w})\textbf{\textit{n}}]_{\tau}-\alpha (\nabla\,w)_{\tau}$.

\noindent Observe that in view of the assumption on $\textit{\textbf{f }}$ and Lemma~\ref{caractérisation T^2},
$\textbf{\textit{f}}+\Delta\,(\nabla\,{w})$ belongs to $(T^{2}_{k+1}(\Omega))'$. Besides, as $w$ belongs to $W^{1,2}_{-k-1}(\Omega)$, 
then\, $\nabla\,w$ belongs to $W^{0,2}_{-k-1}(\Omega)$ and $\Delta(\nabla\,w)$ belongs to $(T^{2}_{k+1}(\Omega))'$. As a consequence,\, $\textbf{\textit{h'}}$\, belongs to \,$H^{-3/2}(\Gamma)$\,
 and clearly satisfies\, $\textbf{\textit{h'}}\cdot\textbf{\textit{n}}=0$ \, on $\Gamma$. Thus, due to the Theorem~\ref{solutions très faibles p=2}, problem \eqref{problem relevement g p=2} has a solution $(\textbf{\textit{z}},{\pi})\in W^{0,2}_{-k-1}(\Omega)\times {{W}}^{-1,2}_{-k-1}(\Omega)$\, if the following condition is satisfied.
\begin{eqnarray}\label{CC lié très faible p=2}
\nonumber\forall (\boldsymbol{\xi},\eta)\in \mathcal{N}_{k+1}^{2,2}(\Omega),&&\quad
\langle\textbf{\textit{f }}+\Delta\,(\nabla w),\boldsymbol{\xi}\rangle_{(T^{2}_{k+1}(\Omega))'\times T^{2}_{k+1}(\Omega)}-\displaystyle\int_{\Omega}
(\chi-\Delta\,w)\,{\eta}d\textbf{\textit{x}}\\
 &&+\langle\textbf{\textit{h}}',\boldsymbol{\xi}\rangle_{H^{-3/2}(\Gamma)\times H^{3/2}(\Gamma)}=0.
\end{eqnarray}
Let $(\boldsymbol{\xi},\eta)$ be in $\mathcal{N}_{k+1}^{2,2}(\Omega)$. For any ($\boldsymbol{\varphi},\psi)\in\mathcal{D}(\overline{\Omega})\times\mathcal{D}(\overline{\Omega}),$ we have
\begin{eqnarray}\label{Formule de green pour cc p=2}
 &&\displaystyle\int_{\Omega}\big[\big(-\Delta\,\boldsymbol{\varphi}+\nabla\,\psi\big)\cdot\boldsymbol{\xi}-\eta\,\mathrm{div}\,\boldsymbol{\varphi}\big] d\textbf{\textit{x}}\\
\nonumber &&=
\langle{\boldsymbol{\varphi}\cdot\textbf{\textit{n}}},2[\mathrm{\textbf{D}}(\boldsymbol{\xi})\textbf{\textit{n}}]\cdot{\textbf{\textit{n}}}-\eta \rangle_{H^{-1/2}(\Gamma)\times H^{1/2}(\Gamma)}
-\langle 2[\mathrm{\textbf{D}}(\boldsymbol{\varphi})\textbf{\textit{n}}]_{\tau}+\alpha\,\boldsymbol{\varphi}_{\tau},{\xi}_{\tau}\rangle_{H^{-3/2}(\Gamma)\times H^{3/2}(\Gamma)}.
\end{eqnarray}
In particular, the Green's formula~\eqref{Formule de green pour cc p=2} holds for any  $\boldsymbol{\varphi}=\nabla\,w\in W^{0,2}_{-k-1}(\Omega)$ and $\psi=0$. We obtain
\begin{eqnarray}\label{formule de green pour nabla w p=2}
 && \langle -\Delta\,(\nabla\,w),\boldsymbol{\xi} \rangle_{(\textbf{\textit{T}}^{2}_{k+1}(\Omega))'\times \textbf{\textit{T}}^{2}_{k+1}(\Omega)}-\displaystyle\int_{\Omega}\Delta \,w\,\,\eta d\textbf{\textit{x}}\\
\nonumber &&=
\langle g, 2[\mathrm{\textbf{D}}(\boldsymbol{\xi})\textbf{\textit{n}}]\cdot{\textbf{\textit{n}}}-\eta \rangle_{H^{-1/2}(\Gamma)\times H^{1/2}(\Gamma)}
- \langle \textbf{\textit{h}}-\textbf{\textit{h}}', \boldsymbol{\xi}\rangle_{H^{-3/2}(\Gamma)\times H^{3/2}(\Gamma)}=0.
\end{eqnarray}

\noindent Combining \eqref{condition de compatibilite solution faible p=22} and \eqref{formule de green pour nabla w p=2} allows to obtain~\eqref{CC lié très faible p=2}.
Thus setting $\textbf{\textit{u}}=\textit{\textbf{z}}+\nabla\,{w}$, the pair $(\textbf{\textit{u}}, {\pi}) \in  W^{0,2}_{-k-1}(\Omega) \times {{W}}^{-1,2}_{-k-1} (\Omega)$ 
is the solution of $(\mathcal{S}_{T})$.

\end{proof}
\section*{Conclusion}
In this paper, we solved the exterior Stokes problem with the Navier slip boundary conditions with a positive friction term. To prescribe the growths or decay of functions at infinity, we set the problem in weighted Sobolev spaces. Our study is based on an $L^2$-theory. We established the existence and the uniqueness of the very weak solution to the problem. The main idea consists in the use of a duality argument with the strong solutions obtained in~\cite{DMR-2019}. In a forthcoming work, we will study the very weak solution to the problem in weighted $L^p$ spaces.

\end{document}